\newcommand{\R}{\mathbb{R}} 
\newcommand{\N}{\mathbb{N}} 
\renewcommand{\S}{\mathbb{S}} 
\newcommand{\C}{\mathbb{C}} 
\newcommand{\eps}{\varepsilon} 
\newcommand{\p}{\partial}
\newcommand{\diff}{\,\mathrm{d}} 
\newcommand{\radonR}{\mathscr{R}}
\newcommand{\e}{\mathrm{e}}
\renewcommand{\i}{\mathrm{i}}
\newcommand{\FT}{\mathscr{F}} 
\newcommand{\norm}[2]{\Vert #1 \Vert_{#2}}
\newtheorem{thm}{Theorem}
\newtheorem{lemma}{Lemma}
\theoremstyle{definition}
\newtheorem{example}{Example}
\theoremstyle{remark}
\newtheorem{remark}{Remark}
\title{X-ray imaging from nonlinear waves: numerical reconstruction of a cubic nonlinearity}
\author{Suvi Anttila, Markus Harju, and Teemu Tyni\\Research Unit of Applied and Computational Mathematics, University of Oulu, Finland}
\date{}
\begin{document}

\maketitle

\begin{abstract}
    We study an inverse boundary value problem for the nonlinear wave equation in $2 + 1$ dimensions. The objective is to recover an unknown potential $q(x, t)$ from the associated Dirichlet-to-Neumann map using real-valued waves.
    We propose a direct numerical reconstruction method for the Radon transform of $q$, which can then be inverted using standard X-ray tomography techniques to determine $q$. Our implementation introduces a spectral regularization procedure to stabilize the numerical differentiation step required in the reconstruction, improving robustness with respect to noise in the boundary data. We give rigorous justification and optimal stability estimates for the regularized spectral differentiation of noisy measurements, which may be of independent interest.
    Numerical experiments demonstrate the feasibility of recovering potentials from boundary measurements of nonlinear waves and illustrate the advantages of the Radon-based reconstruction. 
\end{abstract}


\section{Introduction}

We consider an inverse boundary value problem for the nonlinear wave equation
\begin{equation}\label{eq: nonlinear wave}
\begin{cases}
\partial_t^2 u(x,t) - \Delta u(x,t) + q(x,t) u(x,t)^p= 0, &(x,t) \in \Omega \times [0,T],\\
u(x,t) = f(x,t), &(x,t)\in \partial \Omega \times [0,T],\\
u(x,0) = \partial_t u(x,0) = 0, &x \in \Omega
\end{cases}
\end{equation}
in a bounded Lipschitz domain $\Omega\subset \R^2$, where $p \geq 2$ is an integer, and $q\in C^{s}(\Omega \times [0, T])$ for a given $s > 1$.
The inverse problem we study is the recovery of the unknown potential $q$ given the Dirichlet-to-Neumann map (DN-map)
\begin{align*}
    \Lambda_q : H^{s+1}(\partial \Omega \times [0, T]) \to H^{s}(\partial \Omega \times [0, T]), \hspace{0.3cm} \Lambda_q(f) = \partial_{\nu} u|_{\partial \Omega \times [0,T]},
\end{align*}
where $u$ satisfies \eqref{eq: nonlinear wave} and $\nu$ is the outward pointing normal of the lateral boundary 
\begin{equation*}
    \Sigma := \partial \Omega \times [0, T].
\end{equation*}
Here and throughout, $H^{s}$ denotes the standard $L^2$-based Sobolev space. 
In this work, we develop a numerical method for reconstructing the Radon transform $\radonR(q)$ of $q$, often referred to in practice as the sinogram, from boundary measurements of nonlinear waves.
The potential $q$ is then obtained from $\radonR(q)$ using standard X-ray computed tomography (CT) reconstruction methods. Our numerical approach follows the theory developed in \cite{LLPT20}.

In recent years, many new results related to inverse problems for nonlinear equations have been published. This is largely due to the property that nonlinear wave interactions can be used as a beneficial tool in solving the inverse problem, proved first by Kurylev, Lassas and Uhlmann in \cite{kurylev_inverse_2018}. Their work focused on the scalar wave equation with a quadratic nonlinearity and they showed that local measurements of solutions for the wave equation determine the global topology, differentiable structure, and the conformal class of the metric $g$ on a globally hyperbolic four-dimensional Lorentzian manifold. Their method, now called \emph{higher order linearization}, has made inverse problems for nonlinear models more approachable and allowed the solving of inverse problems for nonlinear equations for which the corresponding linear problems are still unsolved.

Following \cite{kurylev_inverse_2018}, inverse problems for general semi-linear wave equations on Lorentzian manifolds were considered in \cite{lassas_inverse_2018}, while the analogous problem for the Einstein--Maxwell equations was investigated in \cite{lassas_determination_2017}. Since these works, higher order linearization has been applied extensively to inverse problems for nonlinear models.
The method has been used to study, for example, elliptic equations \cite{carstea_reconstruction_2019, feizmohammadi_inverse_2020, lassas_partial_2020, krupchyk_partial_2021, krupchyk_remark_2020, lassas_inverse_2021, liimatainen_uniqueness_2024}, real principal type equations \cite{oksanen_inverse_2024}, nonlinear elastic wave equations \cite{de_hoop_nonlinear_2019, de_hoop_nonlinear_2020}, and nonlinear wave equations on Lorentzian manifolds \cite{hintz_dirichlet--neumann_2022, hintz_inverse_2022, lassas_stability_2025, wang_inverse_2019, feizmohammadi_recovery_2022}. The method has also been used for inverse problems in the context of the Boltzmann equation \cite{balehowsky_inverse_2022, lai_reconstruction_2021}, the Einstein equations \cite{kurylev_inverse_2022, uhlmann_determination_2020}, and the Yang-Mills equations \cite{chen_detection_2021, chen_inverse_2021}. Numerical studies on the subject have been conducted, for example, in the context of the Westervelt equation (ultrasound imaging) in \cite{acosta_nonlinear_2022, kaltenbacher_identification_2021}, for nonlinear waves in \cite{sa_barreto_recovery_2022, LLPT24}, and in the context of Helmholtz scattering in \cite{griesmaier_inverse_2022}.

The works \cite{LLPT20, LLPT24} are closely related to ours and form the basis that the present work builds on. The stability and uniqueness of the inverse problem of equation \eqref{eq: nonlinear wave}, in a more general case where $\Omega \subset \R^n$, was studied in \cite{LLPT20}. In particular, for $n \geq 2$, it was shown that the DN-map $\Lambda_q$ uniquely determines $\radonR(q)$ and a stable reconstruction algorithm was provided. The work was primarily analytical, and did not address the challenges of numerical implementation. A first step toward computation was taken in \cite{LLPT24}, where a numerical implementation was developed in the one-dimensional domain $\Omega \subset \R$ for a pointwise reconstruction of the potential $q$. While this provides useful insight for our work, the one-dimensional method does not directly extend to higher dimensions, where the reconstruction method involves recovering the Radon transform of the potential $q$.

In a different direction, \cite{sa_barreto_recovery_2022} considers a kind of near-field scattering problem for a wave-equation with a cubic nonlinearity. There, a domain $\Omega$ is probed using complex-valued harmonic waves from outside and the transmitted waves are measured, resulting in the recovery of the Radon transform of $q$. This approach requires a different measurement setup to ours.

In contrast to the works mentioned above, we propose the first numerical implementation of the reconstruction method from \cite{LLPT20} in two space-dimensions. The novelty of this work is the implementation of a direct, non-iterative method that uses real-valued waves and produces a Radon transform of the potential $q$ using waves, instead of X-rays. To mitigate the effects of noise in the measurement data, we introduce a spectral regularization step for evaluating certain numerical derivatives involved in our approach. We give rigorous justification and optimal stability estimates of the H\"older type
\begin{equation*}
    \norm{f^{(p)} - D^{(p)}_R f_\delta}{L^2} \asymp \norm{f}{H^s}^{p/s} \norm{f - f_{\delta}}{L^2}^{1-p/s}
\end{equation*}
for the regularized spectral differentiation $D^{(p)}_R f_\delta$ of noisy measurements $f_\delta\in L^2$ of $f\in H^s$, which may be of independent interest. Finally, for comparison, we also implement a method for pointwise reconstruction of $q$, which does not resort to the Radon transform.

The structure of this paper is as follows. In Section~\ref{sec: rec methods} we discuss the theoretical reconstruction methods, following mainly the theory developed in \cite{LLPT20}. Section~\ref{sec: numerical implem} is focused on different aspects of the numerical implementations we have carried out for this work. We discuss the numerical solution of the forward problem of \eqref{eq: nonlinear wave} and the construction of the synthetic DN-map in Section~\ref{sec: forward model and dn map}, and the numerical implementation of the reconstruction algorithm for $\radonR(q)$ in Section \ref{sec: numerical implem of the ip}. In Section~\ref{sec: numerical differentiation}, we discuss a regularized spectral differentiation method used to approximate the numerical derivatives needed in the reconstruction algorithm. We derive stability estimates for the regularized differentiation. Finally, in Section~\ref{sec: numerical examples} we present and compare examples of different potentials reconstructed both by the Radon transform reconstruction method and the pointwise reconstruction method.

\section{Reconstruction methods}\label{sec: rec methods}

The reconstruction of $q$ is based on a higher-order linearization in small parameters in the boundary values $f$: Given suitable boundary data $f$ and $\Lambda_q(f)$ for \eqref{eq: nonlinear wave}, certain higher derivatives of $\Lambda_q$ with respect to $f$ allow us to reconstruct $q$ or $\radonR(q)$, depending on the choice of $f$.

\subsection{Higher order linearization method}

Let $\vec{\eps} = (\eps_1, \ldots, \eps_m)$ be small parameters and $f_1, \ldots, f_m \in H^{s+1}(\Sigma)$, where $s \in \N$ and $s \geq 1$.  Let $u_{\vec{\eps}}\in \cap_{0\leq k\leq s+1} C^k([0,T];H^{s+1-k}(\Omega))$ be the (unique small) solution to
\begin{equation}\label{eq: nonlinear wave eps: general case}
    \begin{cases}
        \square u_{\vec{\eps}}(x,t) = -q(x,t)u_{\vec{\eps}}^p(x,t), &(x,t) \in \Omega \times [0,T],\\
        u_{\vec{\eps}}(x,t) = \sum_{j = 1}^m \eps_j f_j(x,t), &(x,t)\in \Sigma,\\
        u_{\vec{\eps}}(x,0)=\partial_t u_{\vec{\eps}}(x,0) = 0, &x\in\Omega.
    \end{cases}
\end{equation}
Here we denote the wave operator by $\square := \partial^2_t - \Delta$.

For \emph{higher order linearization}, we will differentiate equation \eqref{eq: nonlinear wave eps: general case} $p$ times with respect to $\vec{\eps}$. For this, we denote
\begin{equation*}
    w := \partial^{\sigma}_{\vec{\eps}} u_{\vec{\eps}}|_{\vec{\eps} = \vec{0}},
\end{equation*}
where $\sigma = (\sigma_1, \ldots, \sigma_m)$ is a multi-index with $|\sigma| = p$. By differentiating the equation \eqref{eq: nonlinear wave eps: general case}, we see that
$w$, the $p$th linearization of $u_{\vec{\eps}}$,
satisfies a linear wave equation of the form
\begin{equation}\label{eq: wave w}
    \begin{cases}
        \square w = -p! q \prod_{i = 1}^m v_i^{\sigma_i}, & \text{in } \Omega \times [0,T],\\
        w = 0, & \text{on } \Sigma,\\
        w|_{t = 0} = \partial_t w|_{t = 0} = 0, &  \text{in } \Omega, 
    \end{cases}
\end{equation}
where, in turn, the functions $v_j:= \partial_{\eps_j} u_{\vec{\eps}}|_{\eps_j = 0}$ solve
\begin{equation}\label{eq: wave v_j}
    \begin{cases}
        \square v_j = 0, & \text{in}\, \Omega \times [0,T], \\
        v_j = f_j, & \text{on}\, \Sigma, \\
        v_j|_{t = 0} = \partial_t v_j|_{t = 0} = 0, &\text{in}\, \Omega
    \end{cases}
\end{equation}
for $j = 1, \ldots, m$.
In this manner, we obtain linear wave equations from the nonlinear wave equation \eqref{eq: nonlinear wave eps: general case}.

Note that because the DN-map
is given, also the normal derivative of $w$ on the lateral boundary is known, because
\begin{equation*}
    \partial_{\nu} w = \partial_{\nu}(\partial^{\sigma}_{\vec{\eps}} u_{\vec{\eps}}|_{\vec{\eps} = \vec{0}}) = \partial^{\sigma}_{\vec{\eps}} \big(\Lambda_q\bigg(\sum_{j = 1}^m \eps_j f_j\bigg)\big)\big|_{\vec{\eps} = \vec{0}}\quad \text{on } \Sigma.
\end{equation*}

Next, we define an auxiliary function $v_0$ to compensate for the fact that $\partial_{\nu} w$ is unknown at $t = T$. Let $f_0 \in H^{s+1}(\Sigma)$ and find a $v_0$ satisfying
\begin{equation}\label{eq: wave v_0}
    \begin{cases}
        \square v_0 = 0, & \text{in}\,\Omega \times [0,T], \\
        v_0 = f_0, & \text{on}\, \Sigma, \\
        v_0|_{t = T} = \partial_t v_0|_{t = T} = 0, &\text{in}\, \Omega.
    \end{cases}
\end{equation}
Multiplying equation \eqref{eq: wave w} by $v_0$, integrating over $\Omega \times [0, T]$, and integrating by parts, we arrive at the integral identity
\begin{align}\label{eq: integralidentity: eps general case}
    \begin{split}
        -p!\int_{\Omega\times [0,T]} q v_0 \prod_{j = 1}^m v_j^{\sigma_j}  \diff x  \diff t &= \int_{\Omega \times [0,T]} v_0 \square w \diff x \diff t \\
        &= - \int_{\Sigma}v_0 \partial_{\vec{\eps}}^{\sigma}\big|_{\vec{\eps}=\vec{0}}\Lambda_q \bigg(\sum_{j = 1}^m \eps_j f_j\bigg) \diff S(x),
    \end{split}
\end{align}
where $\diff S(x)$ is the Lebesgue surface measure on $\Sigma$. For justification of the differentiability of the DN-map $\Lambda_q$, see~\cite{LLPT20}.

Everything on the right-hand side of equation \eqref{eq: integralidentity: eps general case} is known if we know $\Lambda_q$, and thus the integral
\begin{align}\label{eq: integral q_generalvj:s}
    \int_{\Omega\times [0,T]} q v_0 \prod_{j = 1}^m v_j^{\sigma_j}  \diff x  \diff t
\end{align}
is determined by the DN-map $\Lambda_q$. Recall that each $v_j, j = 1, \ldots, m$, satisfies the linear wave equation \eqref{eq: wave v_j}. Since the boundary value of the nonlinear wave equation \eqref{eq: nonlinear wave eps: general case} is $\sum_{j=1}^m \eps_j f_j$, we may freely choose each $f_j$, and consequently $v_j$, to extract the potential $q$ from the integral \eqref{eq: integral q_generalvj:s}. This approach is H\"older stable, see~\cite{LLPT20}.

\subsection{Selecting the boundary values}\label{section: choosing boundary}

We divide the discussion into two parts. First, we study a method that produces a reconstruction of the Radon transform $\radonR(q)$ and then a method that aims to reconstruct the potential $q$ pointwise.

\subsubsection{Reconstruction of the Radon transform of $q$}\label{sec: choosing boundary for radon}

For $\Omega\subset \R^n$ we define the \emph{partial Radon transform} of a function $G=G(x,t)\in C^\infty_0(\Omega\times\R)$, in its spatial variable $x$ as 
\begin{equation}\label{def:Radon_transform}
 \radonR(G)(t,\theta,\eta)=\int_{x\cdot \theta =\eta} G(x,t) \diff S(x), \quad \theta\in \S^{n-1}, \ \eta\in \R.
\end{equation}
(For more details about the Radon transform, see \cite{helgason1999radonBook}.) Here and throughout, $\S^{n-1}=\{x\in\R^n: |x|=1\}$ denotes the unit sphere.

Let
\[
r:=\inf\{r>0: \Omega\subset B_r(x),\text{ for some } x\in\R^n\}.
\]
Then there exists an $x_0\in\R^n$ (which we assume without loss of generality to be the origin) such that $\Omega\subset B_r(x_0)$.
We define the admissible reconstruction domain by setting $T>4r$, $t_1>2r$, $t_2<T-2r$, and defining
\begin{equation}\label{eq: W pointwise}
    W = \Omega\times[t_1,t_2],
\end{equation}
which in Figure~\ref{fig: rec Radon waves} (left) lies within the causal domain of $[0,T]\times\p\Omega$. The corresponding admissible domain of the Radon transform is then
\begin{equation}
    W_{\text{Radon}}:=[t_1,t_2]\times \S^{n-1} \times [-r,r].
\end{equation}

Consider the recovery of the partial Radon transform \eqref{def:Radon_transform} of $q$ for a parameter tuple $(t_0, \theta_0, \eta_0)$ in the admissible reconstruction set $W_{\text{Radon}}$. The Radon transform is a line integral of $q$ along
\[
L(t_0, \theta_0, \eta_0) := \{(x, t) \in \Omega \times [0, T] \mid x \cdot \theta_0 = \eta_0, t = t_0\}.
\]
To reconstruct $\radonR(q)(t_0, \theta_0, \eta_0)$, we choose the waves $v_j, j = 0, \ldots, m$ so that their product $v_0 \prod_{j = 1}^m v_j^{\sigma_j}$ approximates the delta distribution on $L(t_0, \theta_0, \eta_0)$. Then the integral \eqref{eq: integral q_generalvj:s}, up to a constant factor, approximates $\radonR(q)(t_0, \theta_0, \eta_0)$. In three dimensions (two spatial dimensions plus time), the required delta concentration on a line is obtained by focusing two plane waves that intersect along $L(t_0, \theta_0, \eta_0)$.

By \cite[Lemma 8]{LLPT20} we have the following result.
\begin{lemma}\label{lemma:approx Radon}
Let $G$ be compactly supported and Lipschitz. Let $t_0\in \R$ and $\tau>0$. There exists $C>0$ (depending only on $\mathrm{supp} (G)$) such that  
\begin{equation*}
\left| \radonR(G)(t_0,\theta, \eta)-  \frac{\tau}{\pi} \int_{\R}\int_{\R^{n}}G(x,t) \e^{-\tau((x\cdot \theta-\eta)^2 + (t-t_0)^2)} \diff x \diff t \right|
 \leq \frac{\sqrt{\pi}}{2} C \left\| G \right\|_{\mathrm{Lip}}\tau^{-1/2}.
\end{equation*}
Here $\left\| G \right\|_{\mathrm{Lip}}=\inf\{ c\ge 0: |G(x)-G(y)|\le c|x-y|\}$ is a Lipschitz semi-norm and $C$ is independent of $\theta\in \S^{n-1}$ and $\eta\in \R$. In particular, the integral on the left converges uniformly to $\radonR(G)(t_0,\theta, \eta)$ when $\tau \to \infty$.
\end{lemma}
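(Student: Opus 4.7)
The plan is to recognize the approximant as a convolution of the partial Radon transform with a normalized two-dimensional Gaussian, and then to quantify the approximation error using the Lipschitz regularity of $\radonR(G)$ in $(\eta,t)$, which is inherited from the Lipschitz regularity and compact support of $G$.

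First I would apply Fubini's theorem after splitting $x = y\theta + x'$ with $y=x\cdot\theta\in\R$ and $x'\in\theta^\perp$ (so $\diff x = \diff y\, \diff S_{\theta^\perp}(x')$). Writing the inner integral in $x'$ as $\radonR(G)(t,\theta,y)$, the approximant becomes
\begin{equation*}
 \frac{\tau}{\pi}\int_{\R}\int_{\R} \radonR(G)(t,\theta,y)\, \e^{-\tau((y-\eta)^2+(t-t_0)^2)}\diff y \diff t.
\end{equation*}
The kernel $\tfrac{\tau}{\pi}\e^{-\tau(u^2+s^2)}$ is a probability density on $\R^2$ (this fixes the prefactor), so subtracting $\radonR(G)(t_0,\theta,\eta)$ from the integrand gives
\begin{equation*}
 \frac{\tau}{\pi}\int_{\R^2}\bigl(\radonR(G)(t_0+s,\theta,\eta+u)-\radonR(G)(t_0,\theta,\eta)\bigr)\, \e^{-\tau(u^2+s^2)}\diff u \diff s.
\end{equation*}

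Next I would establish that $(\eta,t)\mapsto \radonR(G)(t,\theta,\eta)$ is Lipschitz with a constant $C\|G\|_{\mathrm{Lip}}$ that is uniform in $\theta\in\S^{n-1}$. Because $G$ has compact support in some ball $B_\rho\times[-M,M]$, the intersection of $\mathrm{supp}(G)$ with any affine slab $\{|x\cdot\theta - \eta|\le \delta\}\cap\R^n$ projects onto a set in $\theta^\perp$ of $(n-1)$-dimensional measure bounded by $C_n\rho^{n-1}$ independently of $\theta$. Using this and the pointwise Lipschitz bound on $G$ inside the integral defining $\radonR(G)(t,\theta,\eta)$, one obtains
\begin{equation*}
 \bigl|\radonR(G)(t_0+s,\theta,\eta+u)-\radonR(G)(t_0,\theta,\eta)\bigr|\le C\|G\|_{\mathrm{Lip}}\sqrt{u^2+s^2},
\end{equation*}
with $C$ depending only on $\mathrm{supp}(G)$.

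Finally I would bound the error integral by
\begin{equation*}
 \frac{\tau}{\pi}\,C\|G\|_{\mathrm{Lip}}\int_{\R^2}\sqrt{u^2+s^2}\,\e^{-\tau(u^2+s^2)}\diff u\diff s,
\end{equation*}
and evaluate the integral on the right in polar coordinates: with $r=\sqrt{u^2+s^2}$ and the substitution $v=\sqrt{\tau}r$,
\begin{equation*}
 \int_{\R^2}\sqrt{u^2+s^2}\,\e^{-\tau(u^2+s^2)}\diff u\diff s = 2\pi\int_0^\infty r^2\e^{-\tau r^2}\diff r = \frac{\pi^{3/2}}{2}\,\tau^{-3/2},
\end{equation*}
which collapses the prefactor $\tau/\pi$ to the desired $\tfrac{\sqrt{\pi}}{2}\,\tau^{-1/2}$. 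Uniform convergence in $(\theta,\eta)$ as $\tau\to\infty$ follows since the bound is independent of these parameters.

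The main (and really the only) subtle point is the uniformity of the Lipschitz constant of $\radonR(G)(t,\theta,\cdot)$ in the direction $\theta$; once this is handled via the compactness of $\mathrm{supp}(G)$ as sketched above, the remainder is a clean Gaussian moment calculation.
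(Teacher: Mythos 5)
Your argument is correct, and it recovers exactly the stated constant: reducing the double integral to a convolution of $(\eta,t)\mapsto\radonR(G)(t,\theta,\eta)$ with the normalized two-dimensional Gaussian, checking that this function is Lipschitz uniformly in $\theta$ via the compact support of $G$, and evaluating the first Gaussian moment in polar coordinates gives $\frac{\tau}{\pi}\cdot\frac{\pi^{3/2}}{2}\tau^{-3/2}=\frac{\sqrt{\pi}}{2}\tau^{-1/2}$ as required. The paper does not prove this lemma but cites it from \cite{LLPT20}; your route is the natural one and is the same mechanism as the paper's Lemma~\ref{lemma:approx identity q} specialized to two variables (note $c_2=\Gamma(3/2)/\Gamma(1)=\sqrt{\pi}/2$), so there is nothing to object to.
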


We specialize our considerations to the case $p = 3$.
We define
\begin{equation}\label{eq: H(l)}
    H(l) := \varphi_h (l) \tau^{1/2} \e^{-\frac{1}{2}\tau \, l^2},
\end{equation}
where $\varphi_h \in C^{\infty}_0(\mathbb{R})$ is a cutoff function supported on $[-h, h]$ with $\varphi_h(0) = 1$. Using the function $H$, we define
\begin{equation}\label{eq: planewaves}
\begin{split}
    H_1^{\tau, (t_0, \theta,\eta)}(x,t) &:= H(x\cdot \theta -t - (\eta - t_0)),\\
    H_2^{\tau, (t_0, \theta,\eta)}(x,t) &:= H(- x\cdot \theta -t + (\eta + t_0)).    
\end{split}
\end{equation}
Both functions $H_1^{\tau, (t_0, \theta,\eta)}$ and $H_2^{\tau, (t_0, \theta,\eta)}$ satisfy the linear wave equation, and we regard them as plane waves propagating to direction of $\theta$ and $-\theta$, respectively.

We set 
\begin{align*}
    f_1 = \sqrt[3]{H_1^{\tau, (t_0, \theta_0, \eta_0)}}\bigg|_{\Sigma}
\end{align*}
and use $f = \eps_1 f_1$ as the boundary value of \eqref{eq: nonlinear wave eps: general case}. Therefore, according to \eqref{eq: wave v_j}, $v_1 = \sqrt[3]{H_1}$. In the higher-order linearization method, we differentiate equation \eqref{eq: nonlinear wave eps: general case} three times with respect to $\eps_1$, i.e., we choose $\sigma = 3$. In addition, we set
\begin{equation*}
    v_0|_{\Sigma} := f_0 = H_2^{\tau, (t_0, \theta_0, \eta_0)}|_{\Sigma}
\end{equation*}
as the boundary value for the auxiliary function $v_0$ in \eqref{eq: wave v_0}.

Now, the function $v_1$ is a plane wave traveling in the direction $\theta_0$ and $v_0$ is a plane wave traveling in the opposite direction. These waves meet on the line $L(t_0, \theta_0, \eta_0)$.
When $\tau$ is large enough, the product of these waves approximates the delta distribution on this line. More specifically, the product $v_0 v_1^3 \approx \tau \e^{-\tau ((x \cdot \theta_0 - \eta_0)^2 + (t-t_0)^2)}$ near the line $L(t_0, \theta_0, \eta_0)$, and by the integral identity \eqref{eq: integralidentity: eps general case} and Lemma \ref{lemma:approx Radon}, we have
\begin{equation}\label{eq: recoequ Radon}
    3! \pi \radonR(q)(t_0, \theta_0, \eta_0) = \lim_{\tau\to \infty} \int_{\Sigma} f_0 \partial^3_{\eps_1} \big|_{\eps_1 = 0} \Lambda(\eps_1 f_1) \diff S(x).
\end{equation}
Since the right-hand side of this equation is known from the knowledge of the DN-map, we have successfully formulated a reconstruction of $\radonR(q)(t_0, \theta_0, \eta_0)$. This can then be repeated for all parameter tuples in the admissible set $W_{\text{Radon}}$.

\begin{remark}
    Above, differentiating thrice with respect to one small parameter $\eps_1$ is sufficient for the recovery of $\radonR(q)$. The benefit here is that it suffices to send only \emph{a single plane wave} from each direction $\theta_0 \in \S^{n-1}$. One could also use several small parameters in the boundary values and in this way obtain different products of linear waves in the related integral identity. There are many ways in which the boundary values can be chosen, such as distorted plane waves \cite{kurylev_inverse_2018}, complex harmonic wave packets \cite{sa_barreto_recovery_2022}, or Gaussian beams \cite{lassas_stability_2025}.
\end{remark}

\begin{remark}\label{rem: limited tomo}
    We make a brief detour to limited angle X-ray tomography.
    Consider the continuous 2D Radon transform
    \[
    \radonR(g)(\theta,s) := \int_{L(\theta,s)} g(x) \diff S(x),
    \]
    where $L(\theta,s):=\{ x\in\R^2 \mid x\cdot\theta=s\}$ is the line with normal direction $\theta\in\S^1$ and $s\in\R$ is the signed distance to the origin, see~\cite{helgason1999radonBook}. Then, given the Radon transform $\radonR(g)(\theta,s)$ in an arbitrarily small neighborhood of the point $(\theta_0,s_0)\in \S^1\times \R$, the singularities of $g$ on the line $L(\theta_0,s_0)$ to direction $\theta_0$ can be stably recovered; see~\cite{QuintoRadonSingularities}.
    
    Returning to the inverse problem of determining the nonlinear potential $q$, according to \cite[Proposition 4]{LLPT20}, there is a choice of plane waves which can be used to uniquely determine the Radon (X-ray) transform $\radonR(q)$ of the potential $q$ from the DN-map. The reconstruction of $\radonR(q)$ occurs pointwise in the Radon domain of distances and incident angles. Hence, by limiting the incident angles of plane waves, it is possible to speak of \emph{limited angle X-ray tomography from nonlinear waves}, explored in Figure~\ref{img: recos L limited angles}. Roughly speaking, 
    if plane waves can be sent into the domain $\Omega$ such that the normal directions of the wavefronts are close to $\theta_0\in \S^1$ and the waves' travel times are close to $s_0$, then one can attempt to extract information about the singularities of the potential $q$ along $L(\theta_0,s_0)$ to the direction $\theta_0$. This can be useful for imaging near boundaries of domains over short times.
\end{remark}

\subsubsection{Pointwise reconstruction of $q$}

Let us briefly describe how the method changes if we want to reconstruct $q$ pointwise. In this case the waves $v_j, j = 0, \ldots, m$, are chosen so that their product approximates the delta distribution at a point $(x_0, t_0) \in W$, where the potential $q$ can be then determined.

The following Lemma is from \cite[Lemma 20]{lassas_stability_2025}.
\begin{lemma}\label{lemma:approx identity q}
Let  $\tau>0$ and let $b$ be Lipschitz. Then
\[
\left| b(z_0)-  \left(\frac{\tau}{\pi}\right)^{\frac{n}{2}} \int_{\mathbb{R}^{n}}b(z) \e^{-\tau |z-z_0|^2} \diff z \right| \leq c_n \left\|b \right\|_\mathrm{Lip}\tau^{-1/2}
\]
holds true for all $z_0\in \mathbb{R}^{n}$. In particular, the integral on the left converges uniformly to $b(z_0)$ when $\tau \to \infty$. Here $c_n:=\Gamma\left( \frac{n+1}{2}\right) / \Gamma \left( \frac{n}{2} \right)$.
\end{lemma}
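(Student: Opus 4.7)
The plan is to exploit the fact that the Gaussian $K_\tau(z) := (\tau/\pi)^{n/2}\e^{-\tau|z|^2}$ is a probability density on $\R^n$, so that $\int_{\R^n}K_\tau(z-z_0)\diff z = 1$ for every $z_0$. First I would rewrite the error as a single integral against $K_\tau$:
\begin{equation*}
b(z_0) - \left(\frac{\tau}{\pi}\right)^{n/2}\int_{\R^n} b(z)\e^{-\tau|z-z_0|^2}\diff z = \left(\frac{\tau}{\pi}\right)^{n/2}\int_{\R^n}\bigl(b(z_0)-b(z)\bigr)\e^{-\tau|z-z_0|^2}\diff z.
\end{equation*}
Then the Lipschitz bound $|b(z_0)-b(z)|\le \|b\|_{\mathrm{Lip}}|z-z_0|$ reduces the problem to estimating a purely Gaussian moment integral.

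Next I would translate by setting $y=z-z_0$, which removes the dependence on $z_0$ (this is why the constant ends up independent of $z_0$), and pass to polar coordinates:
\begin{equation*}
\left(\frac{\tau}{\pi}\right)^{n/2}\int_{\R^n}|y|\,\e^{-\tau|y|^2}\diff y = \left(\frac{\tau}{\pi}\right)^{n/2}\omega_{n-1}\int_0^\infty r^{n}\e^{-\tau r^2}\diff r,
\end{equation*}
where $\omega_{n-1}=2\pi^{n/2}/\Gamma(n/2)$ is the surface area of $\S^{n-1}$. The substitution $u=\tau r^2$ turns the radial integral into $\tfrac{1}{2}\tau^{-(n+1)/2}\Gamma((n+1)/2)$.

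Finally I would collect the constants: the $\tau$-factors combine to yield exactly $\tau^{-1/2}$, and the remaining prefactor simplifies to
\begin{equation*}
\frac{\omega_{n-1}\,\Gamma\bigl(\tfrac{n+1}{2}\bigr)}{2\pi^{n/2}} = \frac{\Gamma\bigl(\tfrac{n+1}{2}\bigr)}{\Gamma\bigl(\tfrac{n}{2}\bigr)} = c_n,
\end{equation*}
recovering the claimed bound. Uniform convergence as $\tau\to\infty$ follows immediately because the right-hand side tends to zero independently of $z_0$. There is no real obstacle here; the whole argument is a standard mollifier estimate, and the only point requiring mild care is the explicit identification of the sharp constant $c_n$ via the formula for $\omega_{n-1}$ and the gamma integral. (Since the statement assumes $b$ globally Lipschitz on $\R^n$, integrability of $|y|\e^{-\tau|y|^2}$ is the only tail condition needed, so no extra localization is required.)
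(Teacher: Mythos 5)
Your proof is correct, and it is the standard mollifier/Gaussian-moment argument one would expect: normalize the kernel, apply the Lipschitz bound, translate, pass to polar coordinates, and evaluate the gamma integral, which reproduces the sharp constant $c_n = \Gamma\left(\frac{n+1}{2}\right)/\Gamma\left(\frac{n}{2}\right)$ exactly. Note that the paper does not prove this lemma itself but imports it verbatim as Lemma~20 of the cited reference \cite{lassas_stability_2025}, so there is no in-paper proof to compare against; your argument is complete and is almost certainly the same computation carried out there.
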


Using the function $H$ defined in \eqref{eq: H(l)}, we set
\begin{align*}
    H_1^{\tau, \theta, (x_0, t_0)}(x,t) &:= H((t-t_0) - \theta \cdot (x - x_0)),\\
    H_2^{\tau, \theta, (x_0, t_0)}(x,t) &:= H((t-t_0) + \theta \cdot (x - x_0)),\\
    H_3^{\tau, \theta, (x_0, t_0)}(x,t) &:= H((t-t_0) - \theta^{\perp} \cdot (x - x_0)),
\end{align*}
where $\theta \in \S^1$ and $\theta^{\perp}\in \S^1$ is perpendicular to $\theta$. As before, the functions $H_j^{(x_0, t_0)} := H_j^{\tau, \theta, (x_0, t_0)}, j = 1, 2, 3$, are plane waves propagating to the direction $\theta, -\theta$ and $\theta^{\perp}$, respectively.

We fix an arbitrary $\theta \in \S^1$ and set
\begin{equation*}
        f_1 = H_1^{(x_0, t_0)}|_{\Sigma}, \quad
        f_2 = \sqrt{H_2^{(x_0, t_0)}}\big|_{\Sigma},
\end{equation*}
and use $u = \eps_1 f_1 + \eps_2 f_2$ as the boundary value of  \eqref{eq: nonlinear wave eps: general case}. Therefore $v_1 =  H_1^{(x_0, t_0)}$ and $v_2 =  \sqrt{H_2^{(x_0, t_0)}}$. In the higher-order linearization method, we choose $\sigma = (1, 2)$ and we set
\begin{equation*}
    v_0|_{\Sigma} := f_0 = H_3^{(x_0, t_0)}|_{\Sigma}
\end{equation*}
as the auxiliary function.

Now, when $\tau$ is large, the product $v_0 v_1 v_2^2 \approx \tau^{\frac{3}{2}} \e^{-\tau (|x-x_0|^2 + (t-t_0)^2)}$ near the point $(x_0, t_0)$ and by 
the integral identity \eqref{eq: integralidentity: eps general case} and Lemma \ref{lemma:approx identity q} we have
\begin{equation}\label{eq: recoequ pw}
    3! \pi^{\frac{3}{2}} q(x_0,t_0)= \lim_{\tau\to \infty}
\int_{\Sigma} f_0 \partial^{(1,2)}_{\eps_1, \eps_2}\big|_{\eps_1 = \eps_2 = 0} \Lambda(\eps_1 f_1 + \eps_2 f_2 ) \diff S(x).
\end{equation}
as the reconstruction formula for $q(x_0, t_0)$. This is then repeated for all points in the admissible reconstruction area $W$.

\section{Numerical implementation}\label{sec: numerical implem}

\subsection{Discretization of the forward model and evaluating the DN-map}\label{sec: forward model and dn map}

We use finite differences to solve the nonlinear wave equation \eqref{eq: nonlinear wave} numerically in a 2D rectangle ($n=2$). For a more in-depth discussion about finite difference methods for the wave equation, we refer the reader to \cite{mitchell_finite_1980}.

Let $N_{x_1}, N_{x_2}$ and $N_t$ be positive integers, $T>0$, and let $\Omega = [a_1, b_1] \times [a_2, b_2]$ with $a_1<b_1$ and $a_2<b_2$. We divide the spatial domain $\Omega$ and the time domain $[0, T]$ into uniformly spaced grids 
\begin{align*}
    x_1^{(i)} &= a_1 + (i-1) \Delta x_1,\quad  i = 1, \ldots, N_{x_1}, \\
    x_2^{(j)} &= a_2 + (j-1) \Delta x_2,\quad  j = 1, \ldots, N_{x_2}, \\
    t^{(k)} &= (k-1) \Delta t,\quad  k = 1, \ldots, N_{t},
\end{align*}
where $\Delta x_1 = \frac{b_1 - a_1}{N_{x_1} - 1}$, $\Delta x_2 = \frac{b_2 - a_2}{N_{x_2} - 1}$ and $\Delta t = \frac{T}{N_t - 1}$. In what follows, we are going to use fourth order central differences to discretize second order spatial derivatives. Therefore,  we introduce ghost points outside the domain $\Omega$ for $i = 0, N_{x_1} + 1$ and $j = 0, N_{x_2} + 1$ in anticipation of setting the boundary values of the nonlinear wave equation.  For a wave equation in two space dimensions, the Courant-Friedrichs-Lewy (CFL) number is defined by $c = \frac{\Delta t}{\Delta x_1} + \frac{\Delta t}{\Delta x_2}$. In the numerical implementation we choose the numbers $N_{x_1}, N_{x_2}$ and $N_t$ so that the CFL condition $c \leq \frac{1}{\sqrt{2}}$ is satisfied \cite[Chapter 4.12]{mitchell_finite_1980}.

The discrete equation that we use to represent the nonlinear wave equation \eqref{eq: nonlinear wave} is
\begin{align}\label{eq: discrete_nonlinwave}
\begin{split}
    &\frac{u_{i, j, k+1} - 2 u_{i, j, k} + u_{i, j, k-1}}{\Delta t^2} \\
    &= \frac{-u_{i+2, j, k} + 16 u_{i+1, j, k} - 30 u_{i, j, k} + 16 u_{i-1, j, k} - u_{i-2, j, k}}{12 \Delta x_1^2} \\
    &+ \frac{-u_{i, j+2, k} + 16 u_{i, j+1, k} - 30 u_{i, j, k} + 16 u_{i, j-1, k} - u_{i, j-2, k}}{12 \Delta x_2^2} \\
    &- q_{i, j, k} u_{i, j, k}^p.
\end{split}
\end{align}
Here, we have approximated the second order derivatives using second-order central differences for the time derivative and fourth-order central differences for the spatial derivatives. Higher order differences for the space variables are used to enhance the accuracy of the spatial derivatives, which are especially important for evaluating the normal derivatives on the boundary for the DN-map.  Now, a solution to equation \eqref{eq: nonlinear wave} satisfies the discrete equation \eqref{eq: discrete_nonlinwave} up to an error of scale $\Delta t^2 + \Delta x_1^4 + \Delta x_2^4$.

To enforce the initial condition of equation \eqref{eq: nonlinear wave}, we initialize
\begin{align*}
    u_{i, j, 1} = u_{i, j, 2} = 0
\end{align*}
for all $i = 0, \ldots, N_{x_1} + 1$, $j = 0, \ldots, N_{x_2} + 1$, and to enforce the boundary condition of equation \eqref{eq: nonlinear wave}, we set
\begin{align*}
    u_{i, j, k} = f(x_i, x_j, t_k)
\end{align*}
for $i = 0, 1, N_{x_1}, N_{x_1} + 1$, \hspace{0.1cm} $j = 0, 1, N_{x_2}, N_{x_2} + 1$, and all $k = 1, \ldots, N_t$.

The values for $u_{i, j, k}, i = 2, \ldots, N_{x_1} - 1, j = 2, \ldots, N_{x_2} - 1$ are solved iteratively for each time $k = 3, \ldots, N_t$ using the values $u_{i, j, k}, i = 0, \ldots, N_{x_1} + 1$, $j = 0, \ldots, N_{x_2} + 1$ for times $k - 1$ and $k - 2$. The iterative formula is easily attained by solving for $u_{i, j, k+1}$ in \eqref{eq: discrete_nonlinwave}.

We numerically compare the convergence of the finite difference approximation \eqref{eq: discrete_nonlinwave} and the classical approximation that uses second-order central differences for both time and space. Let $U_{2,N}, U_{4,N}$ denote the numerical solutions of the nonlinear wave equation, with the domain $\Omega \times [0,T]$ divided into $N = N_{x_1} \cdot N_{x_2} \cdot N_t$ nodes. The two results are obtained with the second order central difference approximation and the approximation \eqref{eq: discrete_nonlinwave}, respectively. To study the convergence, we progressively increase the number of nodes $N$ and compare the resulting wave equation approximation to a solution $U_{4, 451\cdot451\cdot9001}$, which was computed with a large number of nodes. To measure the convergence, we use the norm
\begin{equation*}
    \Vert U_{4, 451\cdot451\cdot9001} - U_{\cdot, N} \Vert_{\ell^1(X)} := \sum_{(x_1, x_2, t) \in X} |U_{4, 451\cdot451\cdot9001}(x_1, x_2, t) - U_{\cdot, N}(x_1, x_2, t)|
\end{equation*}
where $X$ is a set of points that are included in every discretization of $\Omega \times [0, T]$. In our convergence tests, the size of $X$ is $1000$ nodes.
For each discretization, we choose $N_{x_1} = N_{x_2}$ and then set $N_t$ such that the CFL condition $c \approx 0.3$. 
The results of the convergence tests are depicted in Figure~\ref{fig: convergence of wave eq solutions}.

\begin{figure}[ht]
    \centering
    \includegraphics[width=10cm]{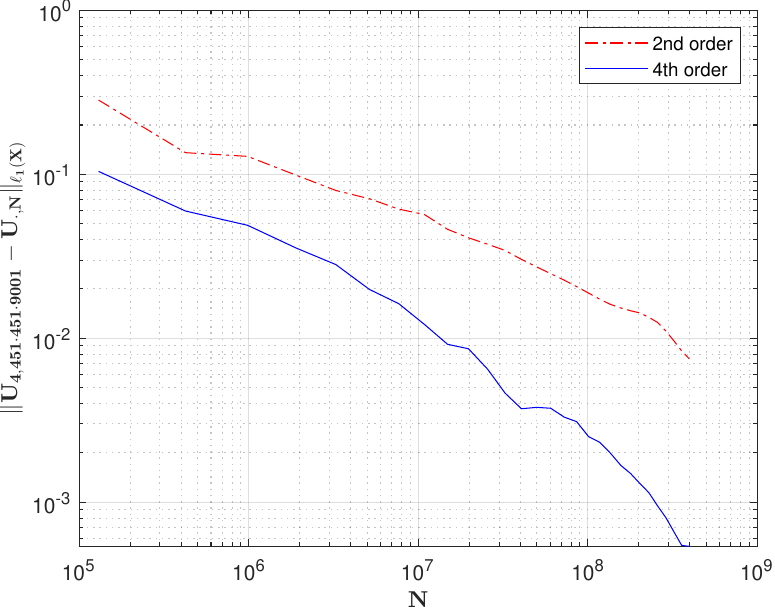}
    \caption{Comparison of the convergence of two finite difference approximation solutions to the nonlinear wave equation as a function of the total number of nodes in the discretization. We see that the finite difference approximation given by \eqref{eq: discrete_nonlinwave} (blue solid line) achieves a given accuracy with fewer discretization nodes compared to the classical second-order solution (red dot-dashed line).
    }
    \label{fig: convergence of wave eq solutions}
\end{figure}

To evaluate the DN-map $\Lambda_q$, we approximate the normal derivatives on the boundary $\Sigma = (\left( \{a_1, b_1\} \times [a_2, b_2] \right) \cup \left( [a_1, b_1] \times \{a_2, b_2\} \right)) \times [0, T]$ by
\begin{equation}\label{eq: normalderivatives_a1b1}
\begin{split}
    \partial_\nu u(a_1, x_2^{(j)}, t^{(k)}) &\approx -\frac{u_{3, j, k} - u_{1, j, k}}{2 \Delta x_1}, \\
    \partial_\nu u(b_1, x_2^{(j)}, t^{(k)}) &\approx \frac{u_{N_{x_1}, j, k} - u_{N_{x_1} - 2, j, k}}{2 \Delta x_1}
    \end{split}
\end{equation}
for all $j = 1, \ldots, N_{x_2}$, $k = 1, \ldots, N_t$ and
\begin{align}\label{eq: normalderivatives_a2b2}
\begin{split}
    \partial_\nu u(x_1^{(i)}, a_2, t^{(k)}) &\approx -\frac{u_{i, 3, k} - u_{i, 1, k}}{2 \Delta x_2}, \\
    \partial_\nu u(x_1^{(i)}, b_2, t^{(k)}) &\approx \frac{u_{i, N_{x_2}, k} - u_{i, N_{x_2} - 2, k}}{2 \Delta x_2}
\end{split}
\end{align}
for all $i = 1, \ldots, N_{x_1}$, $k = 1, \ldots, N_t$.
These normal derivatives will be used later as the synthetic data in the inverse problem.

\subsection{Numerical approximation of the derivatives}\label{sec: numerical differentiation}

Both the reconstruction of the Radon transform \eqref{eq: recoequ Radon} and the pointwise reconstruction method \eqref{eq: recoequ pw} call for numerical evaluation  of the (mixed) derivative 
\begin{equation}\label{eq: third derivative of noisy lambda}
	\partial^{\sigma}_{\vec{\eps}}\big|_{\vec{\eps} = \vec{0}} \widetilde{\Lambda}_q (f),
\end{equation}
of the DN-map 
\begin{equation}\label{eq: noisy DN}
	\widetilde{\Lambda}_q (f) := \Lambda_q (f) + \mathcal{E} (f)
\end{equation}
that is corrupted by measurement error $\mathcal{E}: H^s(\Sigma)\to L^2(\Sigma)$. Later on, we will assume that the error satisfies a bound
\[
\Vert \mathcal{E}(f)\Vert_{L^2(\Sigma)}\leq \delta
\]
for some $\delta>0$ and all sufficiently small boundary values $f\in H^s(\Sigma)$.

The numerical differentiation of data corrupted by noise is a classical ill-posed problem in the sense that a small amount of noise in the data can result in a large error in the approximate derivative. The error can be even more prominent when numerically evaluating higher-order derivatives, such as \eqref{eq: third derivative of noisy lambda}.
Our approach to regularize the numerical differentiation is based on the Fourier transform and spectral filtering.
Our method builds on the ideas presented in \cite{elden_wavelet_2000, qian_fourier_2006}, where they use similar Fourier-based differentiation with a simple truncating filter. Also in \cite{yang_generalized_2014} they use a similar method with a different choise of the filter function.
In this work, we give rigorous justification and optimal stability estimates for a general family of filter functions and compare with examples the effect of two of these filters.
There are a variety of other methods for numerical differentiation using for example Tikhonov regularization \cite{Cullum_numerical_differentiation} or optimization methods \cite{knowles_variational_diff}. For an overview of different methods, we refer to \cite{knowles_methods_2014, smirnova}, and the references therein.

In this section, we consider finding the $p$th derivative $g^{(p)}(x)$ of a generic function $g \in H^s (\R)$, with $s \geq p$. This is relevant to the derivative we need to evaluate for the reconstruction of the Radon transform since in our implementation the derivative \eqref{eq: third derivative of noisy lambda} gets the form $\partial^3_{\eps_1}\big|_{\eps_1 = 0} \widetilde{\Lambda}_q (\eps_1 f_1)$. It can be noted that we are only interested in the derivative at $\eps_1 = 0$.

For $s \in \R$, the Sobolev space $H^s(\R)$ is equipped with the norm
\begin{equation}\label{eq: Sobolev norm}
\Vert g \Vert_{H^s(\R)} := \bigg( \int_{-\infty}^{\infty} (1 + \xi^2)^s | \widehat{g}(\xi) |^2 \diff \xi \bigg)^{\frac{1}{2}}.
\end{equation}

The \emph{regularized spectral differentiation} method for computing $p$th order derivatives is as follows. Let $s>0$ and $p$ be a non-negative integer with $p\leq s$, and suppose $g \in H^s(\R)$ is an exact data function. Let $\widehat{g}$ denote the Fourier transform of $g$, defined by
\begin{equation*}
    \widehat{g} (\xi) := \FT(g)(\xi) = \frac{1}{\sqrt{2 \pi}} \int_{-\infty}^{\infty} g (x) \e^{-\i \xi x} \diff x.
\end{equation*}
With this normalization, 
the Fourier transform extends to an isometry on $L^2(\R)$.
For the $p$th derivative there holds
\begin{equation}\label{eq: Fourier-derivative no noise}
    g^{(p)}(x) = \FT^{-1}((\i \xi)^p \widehat{g})(x) = \frac{1}{\sqrt{2 \pi}} \int_{-\infty}^{\infty} (\i \xi)^p \widehat{g} (\xi) \e^{\i \xi x} \diff \xi.
\end{equation}
Now, suppose that instead of the exact function $g$, we have a measured data function $g_{\delta} \in H^s(\R)$ satisfying $\Vert g - g_{\delta} \Vert_{L^2(\R)} \leq \delta$ 
with the error level $\delta>0.$ 
The goal is to obtain a good approximation of $g^{(p)}(x)$ from $g_{\delta}(x)$. A natural way to stabilize the problem is to eliminate the high frequencies from the solution with a filter function in the Fourier space.

We call a family of measurable functions $m_R:\R\to\C$ \emph{admissible spectral filters of order $p$ on $H^s$}, if
\begin{equation}\label{eq: bound for general spectral filter}
|\xi|^p |m_R(\xi)| \leq C_1 R^p   
\end{equation}
and
\begin{equation}\label{eq: bound for (1 - m) for general spectral filter}
|1-m_R(\xi)| \leq C_2 \min\left\{ 1, \left(\frac{|\xi|}{R}\right)^{s-p} \right\},
\end{equation}
for some constants $C_1, C_2>0$,
and define the regularized differentiation operator $D_R^{(p)}: L^2(\R) \to L^2(\R)$ of order $p$ on $H^s$ as the Fourier multiplier
\begin{equation}\label{eq: regularized diff operator D_R}
\widehat{D^{(p)}_R g}(\xi) := (\i\xi)^pm_R(\xi) \widehat g(\xi).
\end{equation}

Next, we will prove the stability of the regularized differentiation operator $D_R^{(p)}$. For this, we need the following two Lemmas. Lemma \ref{lemma: noise amplification of general regul. diff. oper.} provides a bound for the noise amplification of $D_R^{(p)}$ and Lemma \ref{lemma: approximation error of general regul. diff. oper.} gives a bound for the approximation error of $D_R^{(p)}$.

\begin{lemma}\label{lemma: noise amplification of general regul. diff. oper.}
    Let $s >0$ and $p$ be an integer such that $0 \leq p < s$ and $\delta, E > 0$.
    Let $g \in H^s(\R)$ and $g_\delta \in L^2(\R)$ with $\Vert g - g_\delta \Vert_{L^2(\R)}<\delta$.
    Then
    \begin{equation*}
        \Vert D^{(p)}_R g - D^{(p)}_R g_{\delta} \Vert_{L^2(\R)} \leq C_1 E^{p/s} \delta^{1 - p/s},
    \end{equation*}
    when $R>0$ is chosen as $R = \left( \frac{E}{\delta} \right)^{1/s}$.
\end{lemma}
\begin{proof}
    Using Plancherel's theorem and the bound \eqref{eq: bound for general spectral filter} we get
    \begin{align*}
        \big\Vert D^{(p)}_R g - D^{(p)}_R g_{\delta} \big\Vert_{L^2(\R)} &= \left\Vert \widehat{D^{(p)}_R g} - \widehat{D^{(p)}_R g_{\delta}} \right\Vert_{L^2(\R)}\\
        &= \Vert (\i \xi)^{(p)} m_R(\xi) (\widehat{g} - \widehat{g_{\delta}}) \Vert_{L^2(\R)}\\
        &\leq C_1 R^p \Vert g - g_{\delta} \Vert_{L^2(\R)} < C_1 R^p \delta.
    \end{align*} 
    Setting $R = \left( \frac{E}{\delta} \right)^{1/s}$, we obtain the result.
\end{proof}

\begin{lemma}\label{lemma: approximation error of general regul. diff. oper.}
    Adopt the notation and assumptions of Lemma \ref{lemma: noise amplification of general regul. diff. oper.}. 
    Assume in addition that $\Vert g \Vert_{H^s(\R)}\leq E$.
    Then
    \begin{equation*}
        \Vert g^{(p)} - D^{(p)}_R g \Vert_{L^2(\R)} \leq C_2 E^{p/s} \delta^{1 - p/s},
    \end{equation*}
    when $R>0$ is chosen as $R = \left( \frac{E}{\delta} \right)^{1/s}$.
\end{lemma}
\begin{proof}
    As in the proof of Lemma \ref{lemma: noise amplification of general regul. diff. oper.}, using the Plancherel theorem we estimate
    \begin{align*}
        \Vert g^{(p)} - D^{(p)}_R g \Vert_{L^2(\R)}^2
        &= \Vert (\i \xi)^p (1 - m_R(\xi)) \widehat{g} \Vert_{L^2(\R)}^2\\
        &= \int^{\infty}_{-\infty} \xi^{2p} (1 - m_R(\xi))^2 |\widehat{g}|^2 \diff \xi\\
        &= \int^{\infty}_{-\infty} \frac{\xi^{2p} (1 - m_R(\xi))^2}{(1 + \xi^2)^s} (1 + \xi^2)^s |\widehat{g}|^2 \diff \xi\\
        &\leq \sup_{\xi} \bigg(\frac{\xi^{2p}}{(1 + \xi^2)^s} (1 - m_R(\xi))^2 \bigg) \Vert g \Vert_{H^s(\R)}^2\\
    \end{align*}

    Let us estimate the function $\xi \mapsto \frac{\xi^{2p}}{(1 + \xi^2)^s} (1 - m_R(\xi))^2$. We divide the estimation into two parts. Consider first 
    $|\xi| > R$. 
    Now $\frac{|\xi|}{R} > 1$ and by the bound \eqref{eq: bound for (1 - m) for general spectral filter}
    \begin{equation*}
        \frac{\xi^{2p}}{(1 + \xi^2)^s} (1 - m_R(\xi))^2 \leq C_2 \xi^{2(p - s)} = C_2 \frac{1}{\xi^{2(s - p)}} \leq C_2 \frac{1}{R^{2(s - p)}} = C_2 R^{2(p - s)}.
    \end{equation*}
    Next, suppose 
    $|\xi| \leq R$. 
    Now $\frac{|\xi|}{R} \leq 1$ and by the bound \eqref{eq: bound for (1 - m) for general spectral filter}
    \begin{equation*}
        \frac{\xi^{2p}}{(1 + \xi^2)^s} (1 - m_R(\xi))^2 \leq C_2 \xi^{2(p - s)} \left( \frac{\xi}{R} \right)^{2(s - p)} = C_2 R^{2(p - s)}.
    \end{equation*}
    Thus $\sup_{\xi} \big(\frac{\xi^{2p}}{(1 + \xi^2)^s} (1 - m_R(\xi))^2 \big) \leq C_2 R^{2(p - s)}$.

    The claim follows by setting $R=(\frac{E}{\delta})^{1/s}$ and using the bound $\Vert g \Vert_{H^s(\R)} \leq E$.
\end{proof}

Combining Lemmas~\ref{lemma: noise amplification of general regul. diff. oper.} and \ref{lemma: approximation error of general regul. diff. oper.}, we formulate a stability estimate for regularized spectral differentiation.

\begin{thm}\label{thm: stability estimate for general regul. diff. oper.}
    Let $s>0$ and $p$ be an integer such that $0 \leq p < s$. Then there is a constant $C>0$ depending only on $s$ and $p$ such that the following holds.
    Let $f\in H^s(\R)$ with $\Vert f \Vert_{H^s(\R)}\leq E$, $\delta>0$, and $f_\delta\in L^2(\R)$ with $\Vert f - f_\delta\Vert_{L^2(\R)}<\delta$. Then
    \begin{equation*}
    \Vert f^{(p)} - D_R^{(p)}f_\delta \Vert_{L^2} \leq CE^{p/s} \delta^{1 - p/s},
    \end{equation*}
    when $R>0$ is chosen as $R = \left( \frac{E}{\delta} \right)^{1/s}$.
\end{thm}
\begin{proof}
    Triangle inequality and Lemmas~\ref{lemma: noise amplification of general regul. diff. oper.} and \ref{lemma: approximation error of general regul. diff. oper.}.
\end{proof}

The above bound can be contrasted with the Gagliardo-Nirenberg interpolation inequality~\cite[Theorem 1]{brezis_gagliardo_2018}, which says
\begin{equation*}
\Vert f^{(p)} \Vert_{L^2(\R)} \leq C \Vert f \Vert_{H^s(\R)}^{p/s} \Vert f \Vert_{L^2(\R)}^{1-p/s}.
\end{equation*}
In fact, in the $L^2$-based case the proof of this statement follows directly from H\"older's inequality and Plancherel theorem:
\begin{align*}
\Vert f^{(p)}\Vert_{L^2(\R)} &= \left(\int_\R |\xi|^{2p} |\widehat f(\xi)|^2 \diff\xi\right)^\frac12
\leq
\left(\int_\R
\Big((1+\xi^2)^s |\widehat f (\xi)|^2 \Big)^\frac{p}{s} |\widehat f (\xi)|^{2(1-\frac{p}{s})} \diff \xi\right)^\frac12\\
&\leq
\left(
\int_\R
(1+\xi^2)^s |\widehat f (\xi)|^2\diff \xi
\right)^\frac{p}{s}
\Vert \widehat f \Vert_{L^2(\R)}^{1-\frac{p}{s}}
=\Vert f \Vert_{H^s(\R)}^{p/s} \Vert f \Vert_{L^2(\R)}^{1-p/s}.
\end{align*}

For any linear operator $\mathcal{A}: L^2(\R) \to L^2(\R)$, we also have the following result.

\begin{thm}
    Let $s>0$ and $p$ be an integer such that $0 \leq p < s$. 
    Then there exists a constant $c_0>0$ depending only on $s$ and $p$ such that the following holds.
    For every linear operator $\mathcal A: L^2(\R)\to L^2(\R)$ and every $E,\delta>0$ with $\delta\leq E$ there exists functions $f\in H^s(\R)$ and $f_\delta\in L^2(\R)$ satisfying $\Vert f \Vert_{H^s(\R)}\leq E$ and $\Vert f - f_\delta\Vert_{L^2(\R)}<\delta$ for which
    \begin{equation*}
    \Vert f^{(p)} - \mathcal Af_\delta \Vert_{L^2} \geq c_0 E^{p/s} \delta^{1 - p/s}.
    \end{equation*}
\end{thm}
\begin{proof}
    
    Let $\phi \in C^{\infty}_0(\R)$ be a nonzero test function with $\norm{\phi}{L^2} = 1$. Let us choose $f_{\delta} \equiv 0$ and $f(x) = a \phi(\lambda x)$ for some $a > 0, \lambda \geq 1$ to be fixed later. Now
    \begin{align*}
        \norm{f - f_{\delta}}{L^2}^2 
        &= \norm{f}{L^2}^2 
        = \int_{-\infty}^{\infty} a^2 \Big| \phi(\lambda x) \Big|^2 \diff x
        = a^2 \frac{1}{\lambda} \norm{\phi}{L^2}^2 
        = \frac{a^2}{\lambda}
    \end{align*}
    and
    \begin{align*}
        \norm{f^{(p)}}{L^2}^2
        &= \int_{-\infty}^{\infty} a^2 \Big| D^p_x \big(\phi(\lambda x)\big) \Big|^2 \diff x
        = a^2 \lambda^{2p -1} \norm{\phi^{(p)}}{L^2}^2
    \end{align*}
    and
    \begin{align*}
        \norm{f}{H^s}^2 
        &= \int^{\infty}_{-\infty} \Big( 1 + \xi^2 \Big)^s \left| \widehat{a \phi(\lambda x)}(\xi) \right|^2 \diff \xi
        = \frac{a^2}{\lambda} \int^{\infty}_{-\infty} \Big( 1 + (\lambda \eta)^2 \Big)^s \left| \widehat{\phi}(\eta) \right|^2 \diff \eta\\
        &\leq a^2 \lambda^{2s -1} \int^{\infty}_{-\infty} \Big( 1 + \eta^2 \Big)^s \left| \widehat{\phi}(\eta) \right|^2 \diff \eta
        = a^2 \lambda^{2s -1} \norm{\phi}{H^s}^2.
    \end{align*}

    Let us set $\lambda = \left( \frac{E}{\delta} \right)^{1/s} \geq 1$ and select the constant $a>0$ so that $\norm{f - f_{\delta}}{L^2} < \delta$ and $\norm{f}{H^s} \leq E$. These conditions are achieved by choosing $a$ such that $\frac{a}{\sqrt{\lambda}} \leq \frac{\delta}{2}$ and $a \lambda^{s - 1/2} \norm{\phi}{H^s} \leq E$, so
    \begin{equation*}
        a \leq \frac{\delta \sqrt{\lambda}}{2} = \frac{1}{2} \delta^{1 - 1/2s} E^{1/2s} \text{ and } a \leq \frac{E \lambda^{1/2 - s}}{\norm{\phi}{H^s}} = \frac{1}{\norm{\phi}{H^s}} \delta^{1 - 1/2s} E^{1/2s}.
    \end{equation*}
    We set
    \begin{equation*}
        a = \delta^{1 - 1/2s} E^{1/2s} \min\left\{\frac{1}{2}, \frac{1}{\norm{\phi}{H^s}}\right\}.
    \end{equation*}
    Finally
    \begin{align*}
        \norm{f^{(p)} - \mathcal{A} f_{\delta}}{L^2} 
        &= \norm{f^{(p)}}{L^2} 
        = a \lambda^{p - \frac{1}{2}} \norm{\phi^{(p)}}{L^2}\\
        &= \delta^{1 - 1/2s} E^{1/2s} \min\left\{\frac{1}{2}, \frac{1}{\norm{\phi}{H^s}}\right\} \left( \frac{E}{\delta} \right)^{p/s - 1/2s} \norm{\phi^{(p)}}{L^2}\\
        &\geq c_0 E^{p/s} \delta^{1 - p/s}
    \end{align*}
    for $c_0 = \frac{1}{2} \min\left\{\frac{1}{2}, \frac{1}{\norm{\phi}{H^s}}\right\} \norm{\phi^{(p)}}{L^2}$.

    Thus, we have found a constant $c_0$ independent of the linear operator $\mathcal{A}$ and the bounds $E, \delta$, such that the claim is satisfied.
\end{proof}
\begin{remark}
    If $E < \delta$, the result becomes trivial, because now for any bounded linear operator $\Vert f^{(p)}-\mathcal{A}(f_\delta)\Vert_{L^2(\R)} \leq C\delta$ and there is nothing to prove.
\end{remark}

Therefore, we obtain minimax optimality:
\begin{equation*}
\inf_{\mathcal A \in \mathcal L(L^2,L^2)}
\sup_{\substack{
    \|f\|_{H^s} \le E \\
    \|f - f_\delta\|_{L^2} \le \delta
}}
\| f^{(p)} - \mathcal A f_\delta \|_{L^2}
 \asymp E^{p/s} \delta^{1-p/s}.
\end{equation*}
No linear operator can beat this rate, while $D_R^{(p)}$ associated with an admissible spectral filter $m_R$ satisfies it. Thus, within the class of bounded translation-invariant linear operators on $L^2(\R)$ (i.e. Fourier multipliers with bounded symbols \cite[Theorem~3.16 of Chapter~1]{stein_introToFourier_1971}), admissible spectral filters provide a natural and optimal realization of regularized 
differentiation of order $p$.

\begin{example}
We finish with two examples; the Truncating filter $m_R(\xi) = \mathbf{1}_{[-R,R]}(\xi)$ and the Gaussian filter $m_R(\xi)=\e^{-(\xi/R)^2}$. It is easy to verify that these filters satisfy the conditions \eqref{eq: bound for general spectral filter} and \eqref{eq: bound for (1 - m) for general spectral filter} for the admissible spectral filters. For the Gaussian filter to satisfy condition \eqref{eq: bound for (1 - m) for general spectral filter}, we need to set a lower bound $p \geq s - 2$ for the order of the derivative $p$. Thus, we require $s - 2 \leq p < s$.    
\end{example}

By Theorem~\ref{thm: stability estimate for general regul. diff. oper.}, the stability estimates for regularized derivatives are the same whether using truncation or a Gaussian filter for the filter function $m_R(\xi)$.
Truncation, however, can cause Gibbs phenomenon and overshoot near discontinuities~\cite{Jerri_Gibbs_book}, while a Gaussian filter avoids oscillations but may oversmooth features, as it corresponds to convolution with a Gaussian in the spatial domain.

As with most regularization methods, choosing the regularization parameter is not straightforward. In our case Theorem~\ref{thm: stability estimate for general regul. diff. oper.} gives a theoretically justified choice for the regularization parameter
$R > 0$
depending on the error level $\delta>0$, but in practice $\delta$ is usually unknown, and the choice of
$R$
is made experimentally.
For theory on different methods for choosing the regularization parameter, we refer to \cite{hansen2010discreteinversebook, mueller2012linearandnonlinearinversebook}. In the examples below, and in our inverse problem implementation, we use experimentally chosen regularization parameters.

In the numerical implementation of the inverse problem, we approximate the third derivative in \eqref{eq: recoequ Radon} using the regularized third derivative \eqref{eq: regularized diff operator D_R} with a Gaussian filter. Fourier transforms are computed via the fast Fourier transform (FFT), which assumes periodic data. Since this is not realistic for our measurements, we enforce periodicity by multiplying the data $g$ with a smooth cutoff $\Psi\in C_0^\infty(a,b)$ satisfying $\Psi(x)=1$ near the point of interest $x_0\in(a,b)$. The product $g_{\text{periodic}} = \Psi g$ agrees with $g$ in a neighborhood of $x_0$ and extends periodically from $[a,b]$ to $\R$. If derivatives are required over the entire interval $[a,b]$, periodic extension can instead be obtained with smoothing splines, as in the appendix of~\cite{elden_wavelet_2000}.

\begin{example}

    We consider  Example~5.2 of \cite{qian_fourier_2006}, that is, let 
    \[
    g(x)
    =
    \begin{cases}
        0, &  0\leq x < \frac14,\\
        2x^2 - x + \frac18, & \frac14 < x \leq \frac12,\\
        3x-2x^2-\frac78, & \frac12<x\leq\frac34,\\
        \frac14, & \frac34<x\leq 1.
    \end{cases}
    \]
    It can be verified that $g$ belongs to $H^{2+\epsilon}([0,1])$ for $\epsilon<1/2$, and no better. Indeed, $g'$ is a hat-function supported on $[\frac14,\frac34]$ and $g'' =4(\mathbf{1}_{[\frac14,\frac12]}-\mathbf{1}_{[\frac12,\frac34]})$, having jump-discontinuities at $x=\frac14,\frac12,\frac34$, see Figure~\ref{fig:regu_diff_comparison}. Since $g$ is non-periodic, we extend $g$ from $[0,1]$ to $[0,2]$ and then periodically over $\R$ using the method described in the appendix of~\cite{elden_wavelet_2000}.
    
    The numerical examples are done in \textsc{Matlab}, where the discrete version of $g$ is
    \[
    g_\delta = g+\sigma\, \mathtt{randn}(\mathtt{size}(g))
    \]
    with $g=(g(x_1),\ldots,g(x_n))$, $x_j=(j-1)/(N-1)$, $j=1,2,\ldots,N$ ($N=4097$). The command \texttt{randn}(\texttt{size}(g)) generates an array of the same size as $g$ of normally distributed random numbers with $0$ mean and standard deviation~$1$. We compare the truncation method of \cite{qian_fourier_2006} with the Gaussian filtered approach in Figure~\ref{fig:regu_diff_comparison}. 
    \begin{figure}[ht]
        \centering
        \includegraphics[width=0.45\linewidth]{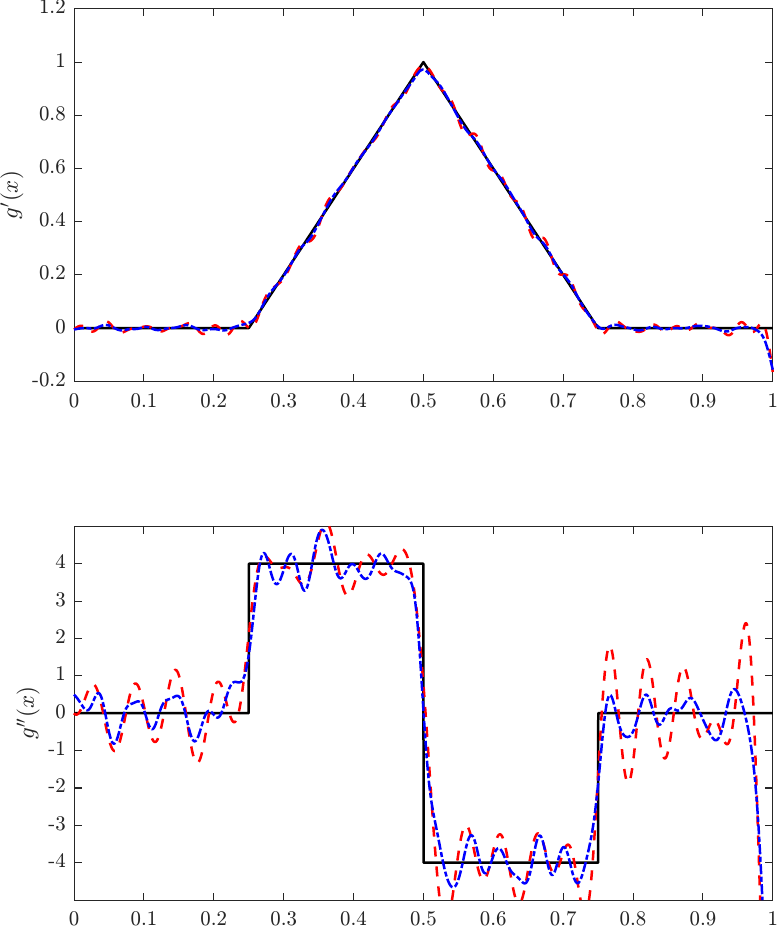}
        \hfill
        \includegraphics[width=0.45\linewidth]{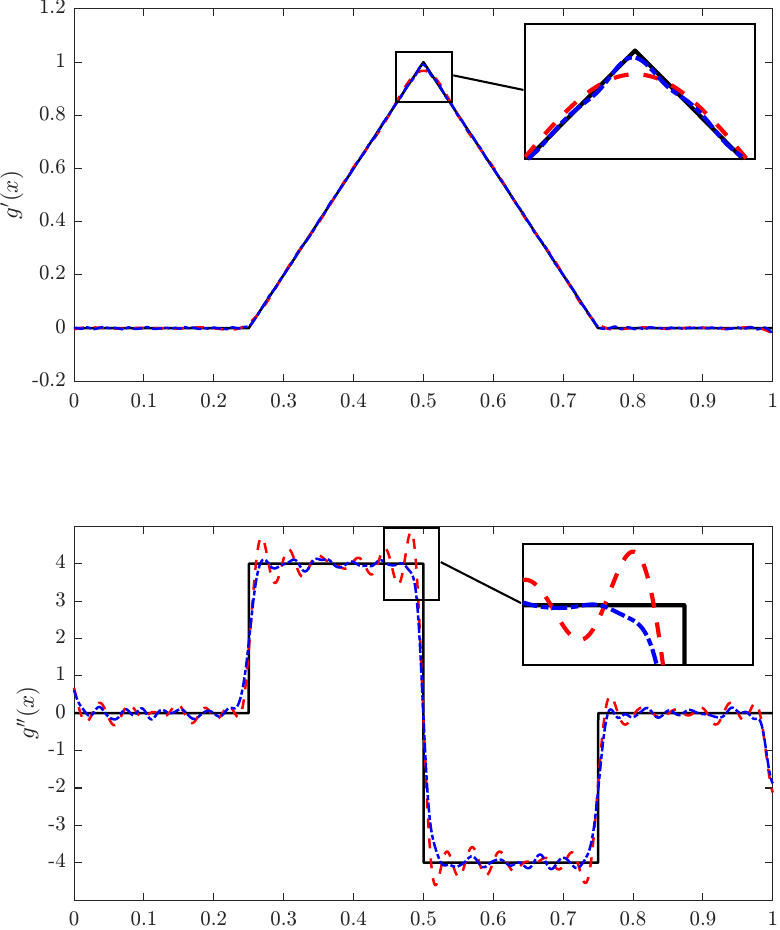}
        \caption{ 
        Comparison of regularized differentiation via high-frequency truncation (red dashed) and Gaussian filtering (blue dot-dashed). The exact derivatives are shown in solid black.
        \textbf{Left:} Noise level $\sigma=10^{-3}$. Truncation cutoffs:
        $R=64$
        (1st derivative) and $40$ (2nd derivative). Gaussian parameters: 
        $R = \sqrt{\frac{10^5}{5}}$
        (1st derivative) and
        $\sqrt{\frac{10^5}{8}}$
        (2nd derivative). RMS-errors: $e_\mathrm{trunc}=0.016$, $e_\mathrm{Gauss}=0.013$ (1st derivative), and $1.54$ vs.\ $1.16$ (2nd derivative).  
        \textbf{Right:} Noise level $\sigma=10^{-4}$. Truncation cutoffs: 
        $R = 200$ 
        (1st derivative) and $87$ (2nd derivative). Gaussian parameter: 
        $R = \sqrt{\frac{10^5}{5}}$
        (both derivatives). RMS-errors: $0.0089$ vs.\ $0.0035$ (1st derivative) and $0.61$ vs.\ $0.49$ (2nd derivative). For the 1st derivative the methods are visually indistinguishable, while for the 2nd derivative the Gaussian filter better suppresses overshoots at jump discontinuities, smoothing the jump in the process.
        }
        \label{fig:regu_diff_comparison}
    \end{figure}

\end{example}

\subsection{Numerical implementation of the inverse problem}\label{sec: numerical implem of the ip}

We consider the nonlinear wave equation \eqref{eq: nonlinear wave eps: general case} with $p = 3$.  For real-valued waves, the cubic nonlinearity coincides with the Kerr-type nonlinearity $|u|^2u=u^3$.

Let us define $g:\R \to \R$ by
\begin{equation}\label{eq: g for radon recoegu}
        g(\eps_1) := \frac{1}{3! \pi} \int_{\Sigma} f_0 \Lambda(\eps_1 f_1) \diff S(x).        
\end{equation}
From \eqref{eq: recoequ Radon} we get the reconstruction formula for $\radonR(q)$ for the parameter tuple $(t_0, \theta_0, \eta_0)$ by
\begin{equation}\label{eq: final recoequ Radon}
    \radonR(q)(t_0, \theta_0, \eta_0) \approx
    \partial^3_{\eps_1}g(0), \quad \tau\gg 1.
\end{equation}
In practice, the measurement of $\Lambda_q$ is corrupted by noise and we only measure $\widetilde \Lambda_q$ as in \eqref{eq: noisy DN}.
Hence, the derivative $\partial^3_{\eps_1}  g(0)$ is approximated using regularized spectral differentiation, following Section~\ref{sec: numerical differentiation}, by measuring $\widetilde g(\eps_1)$ (that is, \eqref{eq: g for radon recoegu} with $\Lambda_q$ replaced by $\widetilde\Lambda_q$). The reconstruction formula can then be written compactly as
\begin{equation}\label{eq: numerical reco}
\radonR(q)^{\text{rec}}(t_0, \theta_0, \eta_0) = D_R^{(3)}\widetilde g(0),
\end{equation}
where for regularized differentiation \eqref{eq: regularized diff operator D_R} we use Gaussian filtering with a regularization parameter $R>0$.

Let $\eps>0$. In the numerical implementation, we divide the interval $(0, \eps]$ into $N_{\eps_1} \in \N$ equally spaced points $\eps_1^{(j)}=j\eps/N_{\eps_1}$, $j=1,\ldots,N_{\eps_1}$. Given the parameters $t_0 \in [0, T]$, $\eta_0\in\R$ and $\theta_0\in\S^1$, we fix the boundary values $\eps_1^{(j)} f_1$ for all $\eps_1^{(j)}$ as in Section \ref{sec: choosing boundary for radon} and evaluate the DN-map $\Lambda_q$ for each boundary value as in Section \ref{sec: forward model and dn map}. In addition, we fix the boundary value $f_0$ for the auxiliary function as in Section~\ref{sec: choosing boundary for radon}. Finally, the required integral in \eqref{eq: g for radon recoegu} over $\Sigma$ is approximated using uniform-grid Riemann sums.

We require the derivative of $\widetilde g$ at the left endpoint $\eps_1 = 0$ of the interval $(0,\eps]$, which is challenging from a numerical standpoint. Therefore, before evaluating the regularized derivative $D_R^{(3)}\widetilde g(0)$, $\widetilde{g}$ is extended to an odd function over the interval $[-\eps, \eps]$ using the fact that $\Lambda_q(\eps_1f)$ is odd in terms of $\eps_1$, since $p = 3$ is odd. Thus, $ g(-\eps_1) = - g(\eps_1)$ for all $\eps_1\in[-\eps,\eps]$. Finally, $\widetilde g(\eps_1)$ is extended to a periodic function over $\R$, as discussed in Section \ref{sec: numerical differentiation}, and the regularized differentiation in \eqref{eq: numerical reco} is done via FFT.

Let us now consider the reconstruction of $\radonR(q)$ for a set of parameters $W_0 \in W_{\text{Radon}}$. Here $W_0 = \mathbf{\Theta} \times P\subset \S^1\times\R$ is the domain of the Radon transform, where $\eta \in P$ is a real number describing a distance from the origin assumed to be within $\Omega$. The discrete reconstruction $\radonR(q)^{\text{rec}}$ is a matrix (sinogram), whose columns correspond to the angles $\Theta$ and the rows correspond to the distances $P$. If $q$ is time-independent, for a given $\theta_0 \in \Theta$, it suffices to make a single measurement of the DN-map corresponding to the parameter tuple $(\frac{T}{2}, \theta_0, 0)$. By varying the auxiliary function $v_0$, we are able to reconstruct the entire column of the sinogram corresponding to the angle $\theta_0$.
More specifically, for each $\eta_0 \in P$, we set the auxiliary function $v_0$ corresponding to $(\frac{T}{2} + \eta_0, \theta_0, \eta_0)$ as in Section \ref{sec: choosing boundary for radon}. With these choices, we can reconstruct $\radonR(q)^{\text{rec}}(\frac{T}{2} + \eta_0, \theta_0, \eta_0) = \radonR(q)^{\text{rec}}(\theta_0, \eta_0)$, where the time parameter can be omitted  since the potential $q$ is assumed to be time-independent. This is then repeated for all $\theta_0 \in \Theta$. The approach is illustrated in Figure~\ref{fig: rec Radon waves} (right). The approach significantly lowers the computational cost of the reconstructions compared to the time-dependent case where a measurement of the DN-map must be made for each parameter tuple in $W_0$.

\begin{figure}[ht]
    \centering
    \includegraphics[width=0.25\linewidth]{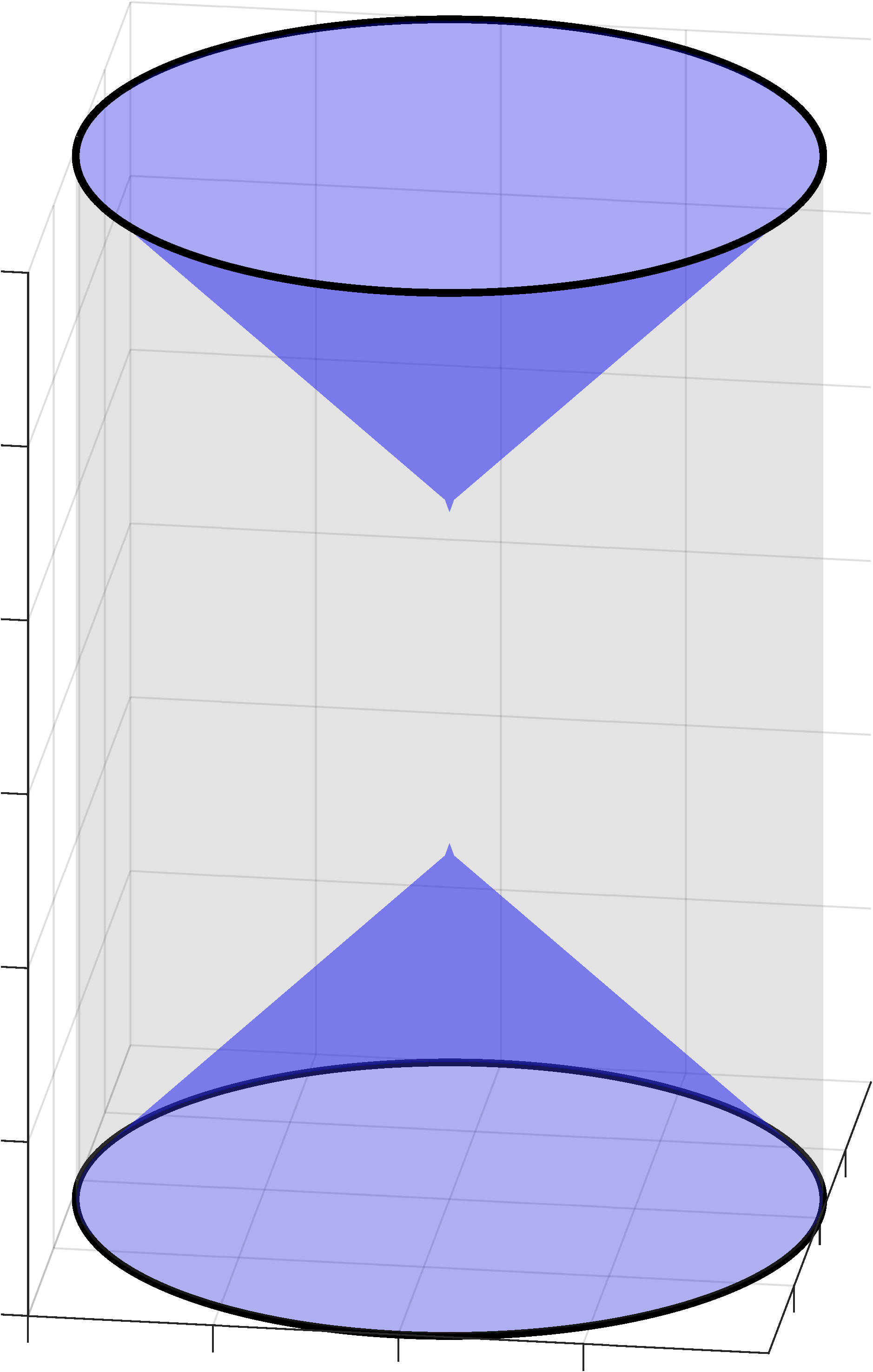}\hspace{2cm}
    \includegraphics[width=0.25\linewidth]{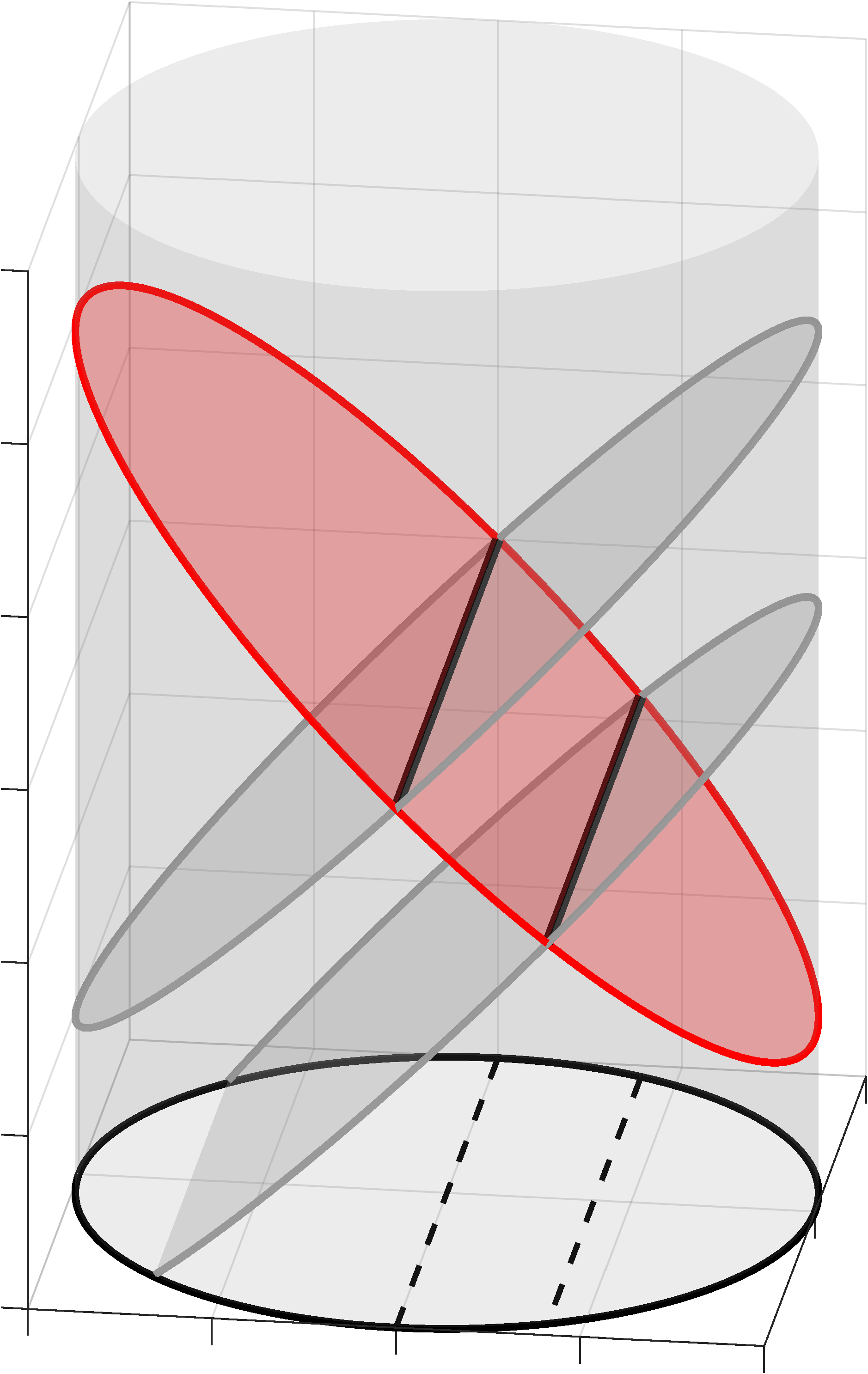}
    \caption{\textbf{Left}: The causal set where waves can be sent and measured lies between the two blue cones within the cylinder $\Omega\times[0,T]$ (gray). This is the optimal set, which can be reached from the lateral boundary $\Sigma$ by light rays and from where light can emanate to the boundary. \textbf{Right}: Illustration of how the Radon transform of $q$ is obtained when $q$ is time-independent. The space–time domain $\Omega\times[0,T]$ is shown as the gray cylinder. A (linear) wave $v_1$ (red plane, bottom-right to top-left) propagates through the domain. Auxiliary waves $v_0$ (gray planes, top-right to bottom-left) intersect $v_1$ along line segments with constant time inside $\Omega$ (solid black lines). These intersection lines are the ones over which $\radonR(q)$ can be computed. Their projections to the initial time plane $t=0$ are indicated by black dashed lines. If $q$ is time-independent, then by varying $v_0$ while fixing a single wave $v_1$ one can recover $\radonR(q)(\theta_0,\eta_0)$  for all $\eta_0$ at a fixed angle $\theta_0\in\S^1$.
    }
    \label{fig: rec Radon waves}
\end{figure}

Finally, once we have the reconstructed Radon transform $\radonR(q)^{\text{rec}}$, filtered backprojection (or another CT reconstruction algorithm) \cite{natterer_mathematicsOfCT_1986} can be used to reconstruct $q^{\text{rec}}$ from $\radonR(q)^{\text{rec}}$.

For a discussion about the computational complexity of the numerical reconstruction scheme, we look at the different steps of the method separately. The complexity of the boundary integration via Riemann sums is $\mathcal O((N_{x_1} + N_{x_2}) N_t)$, while that of the regularized differentiation step using FFT is $\mathcal O(N_\eps\log(N_\eps))$. To achieve the full sinogram of the potential $q$ in one time-step, all of the operations have to be done $N_{\theta} \cdot N_{\eta}$ times, where $N_{\theta}, N_{\eta}$ are the number of columns and rows in the sinogram. Finally, the complexity of the filtered back projection is $\mathcal{O}(N_\theta N_\eta^2)$, see~\cite{roerdink1998data}.

\begin{remark}
    We also implemented the pointwise reconstruction formula \eqref{eq: recoequ pw} to compare it with the Radon approach. The implementation is similar, with the appropriate modifications, to the one described above.
\end{remark}

\section{Numerical examples}\label{sec: numerical examples}

Let $\chi_A$ denote the characteristic function of a set $A \subset \R^2$ and let $\varphi_\text{ellipse} \in C^{\infty}_0(\R^2)$ be a smooth bump function supported inside an ellipse $\{(x,y)\in\R^2 : (x/a)^2 + (y/b)^2<1, a, b > 0\}$ ,  given by
$$
\varphi_\text{ellipse}(x,y)=
\begin{cases}
\exp\left( \dfrac{1}{(x/a)^2 + (y/b)^2-1} \right)
, &(x/a)^2 + (y/b)^2<1 \\
0, &\text{otherwise}.
\end{cases}
$$
Here $a$ and $b$ are the semi-axes of the ellipse. We also apply rotations to the ellipses. Moreover, $\varphi_\text{disc}$ is $\varphi_\text{ellipse}$ with $a = b$.

For the potential $q$, we use the following examples
\begin{enumerate}
    \item\label{eq: example bump} $q(x_1, x_2) = \varphi_{\text{ellipse}_1}(x_1, x_2)$,
    \item\label{eq: example lungs} $q(x_1, x_2) = \varphi_{\text{disc}_1}(x_1, x_2) - \varphi_{\text{ellipse}_2}(x_1, x_2) - \varphi_{\text{ellipse}_3}(x_1, x_2)$,
    \item\label{eq: example disc} $q(x_1, x_2) = \chi_{\text{disc}_2}(x_1, x_2)$,
    \item\label{eq: example L} $q(x_1, x_2) = \chi_L(x_1, x_2) + \varphi_{\text{disc}_2}(x_1, x_2)$,
    \item\label{eq: example sin} $q(x_1, x_2) = \sin(4 \pi k x_1) \cdot \sin(4 \pi k x_2) \cdot \chi_{[-\frac14, \frac14]^2}$, for $k = 2, 3, 4, 5$,
    \item\label{eq: example timedep} $q(x_1, x_2, t) = t \cdot \varphi_{\text{disc}_3}(x_1, x_2)$, where the centre of $\text{disc}_3$ is $(0.3, -0.3) + t(-0.2, 0.2)$.
\end{enumerate}
Ellipses and discs as well as the set $L$ appearing in these expressions are shown below.

For the numerical solution of the forward problem we use the parameters $N_{x_1} = N_{x_2} = 80$ and $N_t = 1500$ in the spatial domain $\Omega = [-0.5, 0.5]^2$ and time domain $[0, T] = [0, 3]$. It follows that the CFL number is $c \approx 0.316$.

For the inverse problem of reconstructing the Radon transform of the unknown potential $q$, we take the reconstruction parameter set $\mathbf{\Theta} \times P$, where 
\[
\mathbf{\Theta} =\{\theta\in\S^1 : \theta=(\cos(t),\sin(t)),\, t\in \Theta'\}
\]
is the set of unit vectors in the directions $\Theta' = \{0^{\circ}, 1^{\circ}, \cdots, 179^{\circ}\}$, and $P$ corresponds to the interval $[-0.4, 0.4]$ divided into $63$ equally spaced distances from the origin. We use experimentally chosen parameters $\eps = 1.5, N_{\eps} = 16$ and $\tau = 700$. The regularization parameter $R = 10$
is chosen for spectral differentiation. To evaluate $q^{\text{rec}}$ from $\radonR(q)^{\text{rec}}$, we use \textsc{Matlab}'s built-in \texttt{iradon}-function implementing the standard filtered back-projection algorithm with the Ram-Lak filter.

For the sake of comparison, we also implemented an (unregularized) pointwise reconstruction algorithm. In this case, we consider a $44 \times 44$ equally spaced reconstruction grid of the domain $[-0.2828, 0.2828]^2$. As the reconstruction parameters we use $\eps = 0.1$ and $\tau = 700$, and as the unit vector $\theta$ we select $\theta = \frac{(1,1)}{\sqrt{2}}$. Numerical integration over $\Sigma$ is carried out with uniform-grid Riemann sums (composite rectangular rule). The parameters here are chosen so that we obtain comparable reconstructions to the Radon approach.

In all examples, the noisy measurements are simulated by adding Gaussian noise with standard deviation
\begin{equation*}
    \sigma_0 = \sigma \cdot \mathtt{mean}(| \Lambda_q(\eps_1 f_1)  |)
\end{equation*}
to the DN-map. Here, $\sigma$ is the noise level and $\mathtt{mean}(| \Lambda_q(\eps_1 f_1)  |)$ is the average magnitude of the exact boundary measurement. We use the noise level $\sigma=2\,\%$ in all our examples. This results in a signal-to-noise ratio of approximately $44\,\mathrm{dB}$.

Figures~\ref{img: recos bump}-\ref{img: recos L} show the reconstructions of Examples~\ref{eq: example bump}-\ref{eq: example L}. We compared the unregularized (naive) finite-differences approach both pointwise and via the Radon transform with the regularized Radon approach. In all cases, the regularized Radon reconstruction provides visually best reconstructions, locating the position, shape, and size of the potential function $q$ accurately. Moreover, in Figure~\ref{img: recos sinograms from lungs} we display the reconstructed sinogram for Example~\ref{eq: example lungs}.
In Figures~\ref{img: recos sin}-\ref{img: recos L limited angles} we further examine the capabilities and limitations of the regularized Radon reconstructions under varying structural complexity (checker board pattern in Example~\ref{eq: example sin}) and data availability (limited incident angles for Example~\ref{eq: example L}). The limited angle experiments displayed in Figure~\ref{img: recos L limited angles} are included to demonstrate the directional visibility phanomena discussed in Remark~\ref{rem: limited tomo}. In the reconstruction method, complete Radon data corresponds to the situation, where waves can be sent into the domain from all incident directions ($\theta \in \mathbb{S}^1$), allowing reconstruction of the full sinogram. In contrast, limited-angle data arises when only a restricted range of incident directions is available. The experiments in Figure~\ref{img: recos sin} showcase how the reconstruction scheme performs on different-frequency potentials when the reconstruction parameters are kept the same. In favorable cases, such as low frequency potential function $q$, the reconstruction is robust, but in the more challenging situations we observe blurring and missing features. Finally, Figure \ref{img: timedependant} shows that the proposed method works equally well for a time-dependent potential.

\begin{figure}[H]
    \centering
    \includegraphics[width=0.32\linewidth]{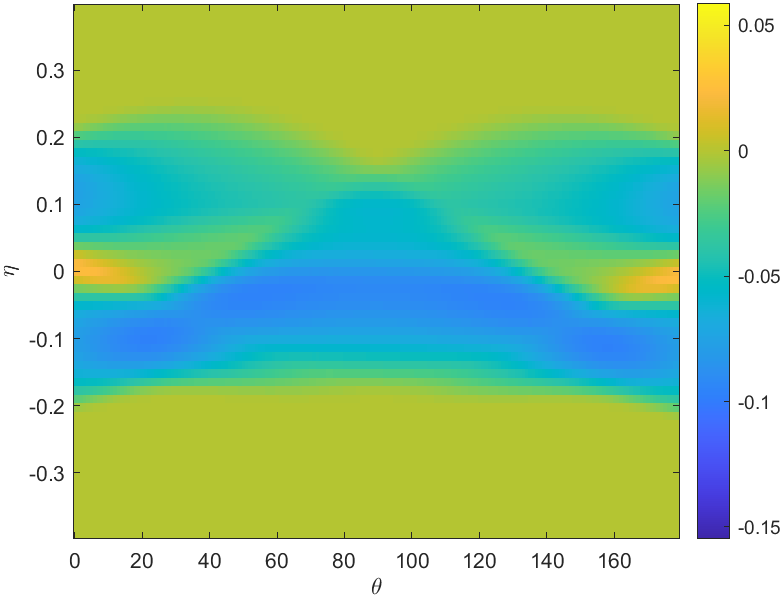}
    \includegraphics[width=0.32\linewidth]{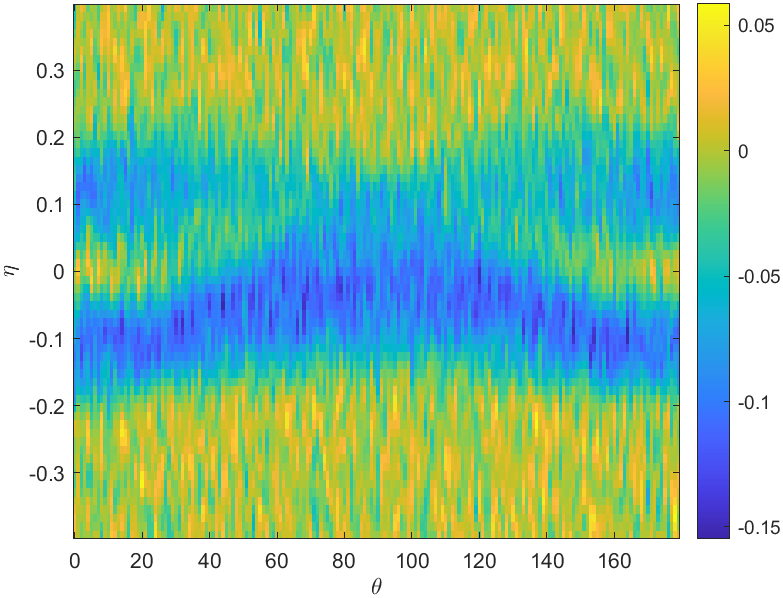}
    \includegraphics[width=0.32\linewidth]{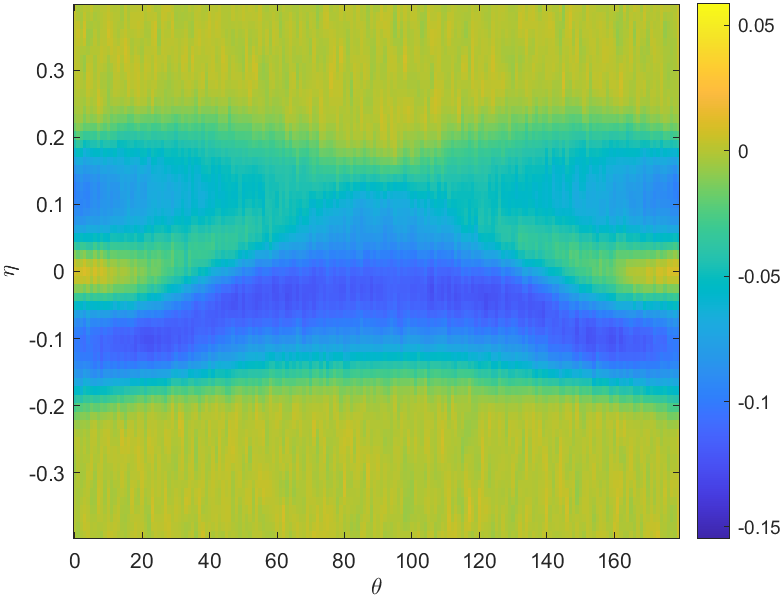}
    \caption{Sinograms of Example~\ref{eq: example lungs}. \textbf{Left:} True $\radonR(q)$ computed with \textsc{Matlab}'s \texttt{radon}. \textbf{Middle:} reconstructed $\radonR(q)^{\text{rec}}$ by finite differences. \textbf{Right:} reconstructed $\radonR(q)^{\text{rec}}$ via spectral differentiation. The finite difference approximation of $D^{(3)}\widetilde g(0)$ amplifies measurement error. A better approximation to the true sinogram is obtained via regularized differentiation as $D_R^{(3)}\widetilde g(0)$. Here 
    $R = 10$.
    }
    \label{img: recos sinograms from lungs}
\end{figure}

\begin{figure}[H]
    \centering
    \includegraphics[width=0.24\linewidth]{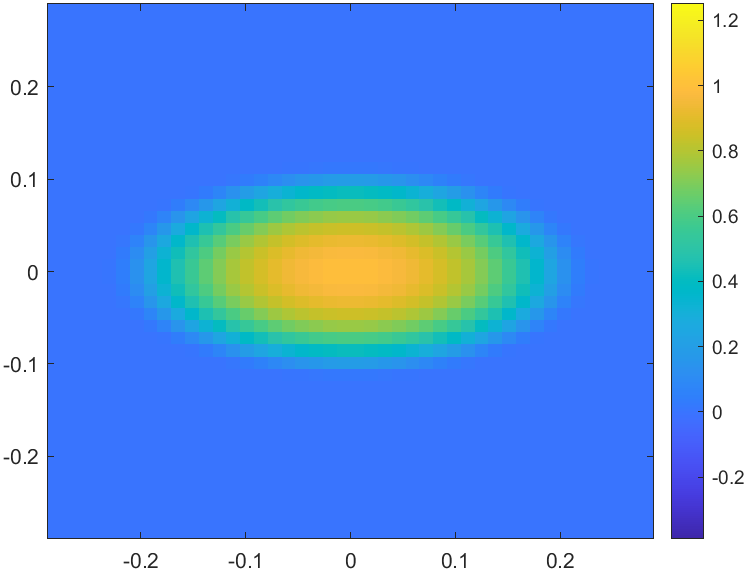}
    \includegraphics[width=0.24\linewidth]{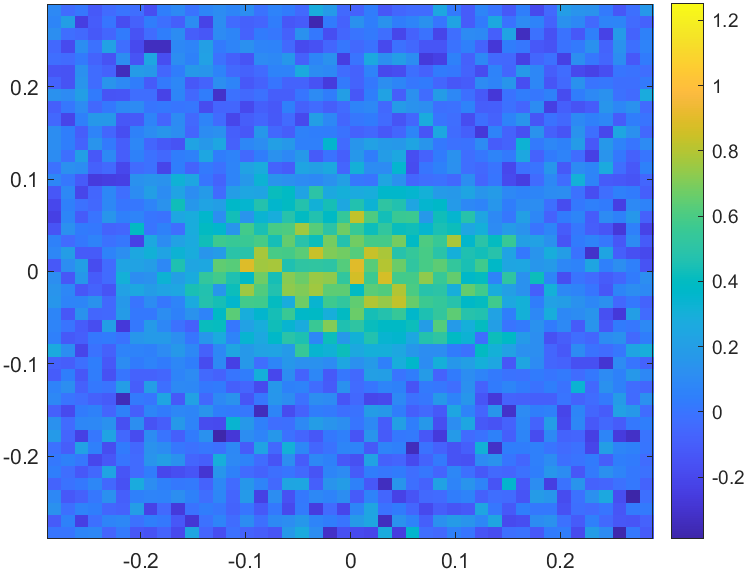}
    \includegraphics[width=0.24\linewidth]{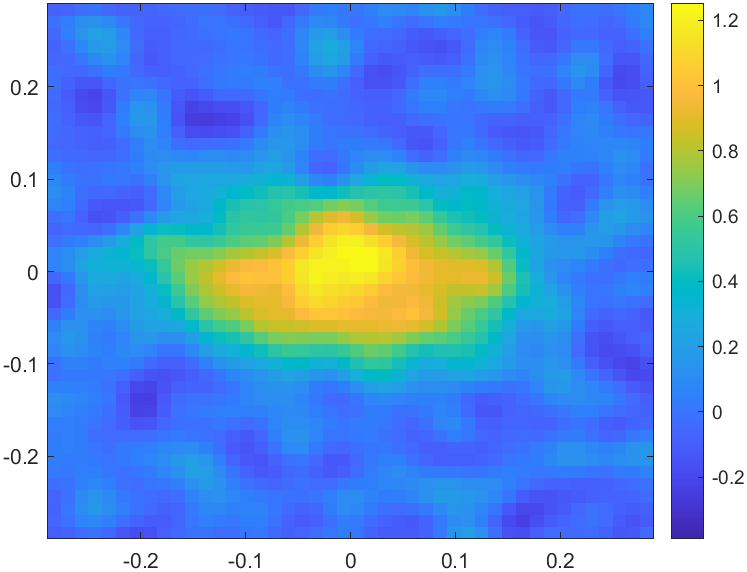}
    \includegraphics[width=0.24\linewidth]{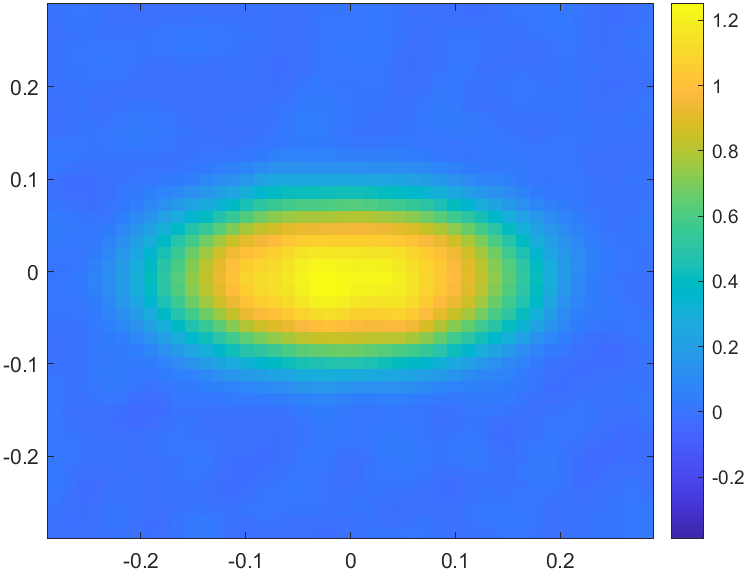}
    \caption{Example \ref{eq: example bump}. \textbf{1st:} True unknown potential. \textbf{2nd:} Pointwise reconstruction using finite differences. \textbf{3rd:} Radon reconstruction using finite differences. \textbf{4th:} Radon reconstruction using regularized spectral differentiation. In the unregularized cases, in the pointwise approach noise dominates the reconstruction compared to the back-projection, where the location and rough shape of the potential can be detected. By regularizing the differentiation step of the reconstruction, the location, shape, and size of the potential function are captured accurately.}
    
    \label{img: recos bump}
\end{figure}

\begin{figure}[H]
    \centering
    \includegraphics[width=0.24\linewidth]{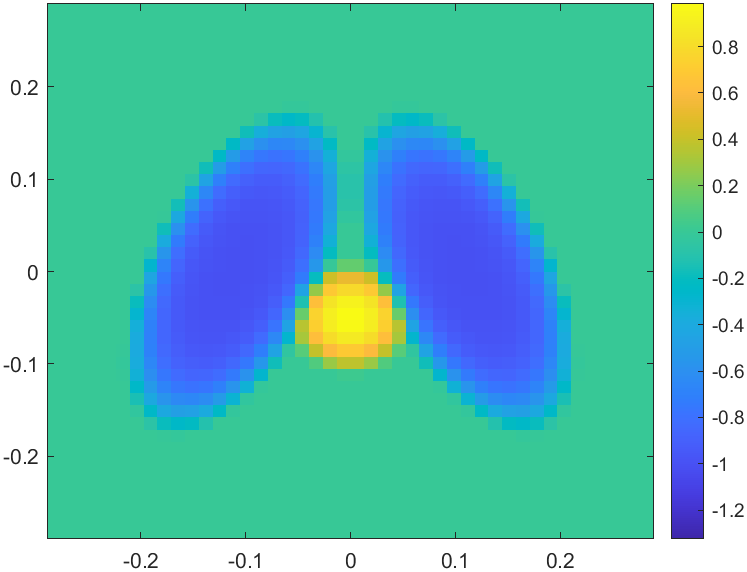}
    \includegraphics[width=0.24\linewidth]{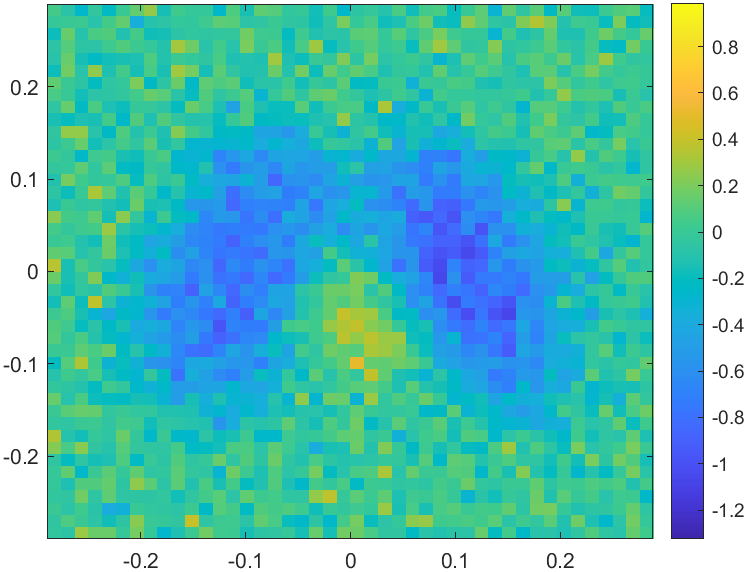}
    \includegraphics[width=0.24\linewidth]{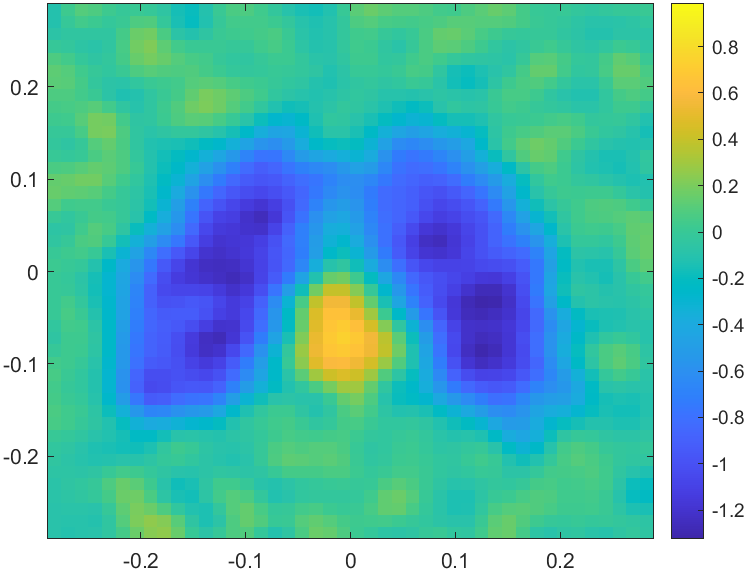}
    \includegraphics[width=0.24\linewidth]{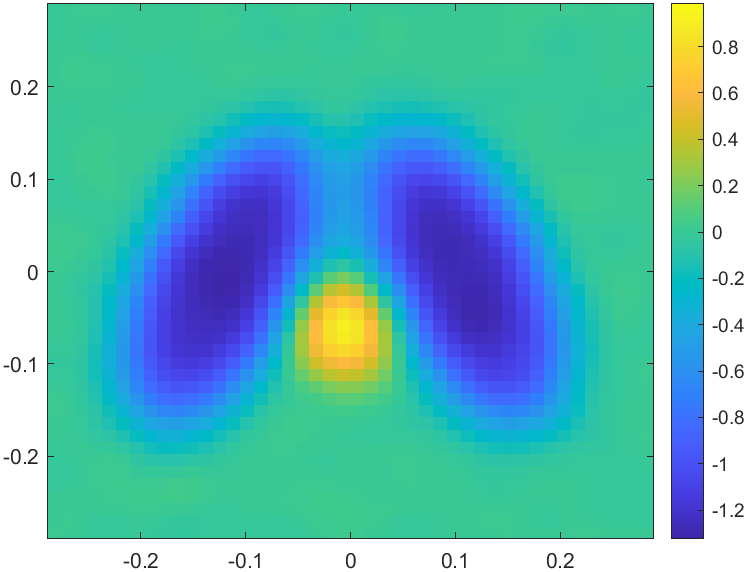}
    \caption{Example \ref{eq: example lungs}, smooth bump functions supported on a disc and two ellipses (tilted by $\pm 22.5^\circ$ from the vertical axis). \textbf{1st:} True unknown potential. \textbf{2nd:} Pointwise reconstruction using finite differences. \textbf{3rd:} Radon reconstruction using finite differences. \textbf{4th:} Radon reconstruction using regularized spectral differentiation. Location, shape, and size of the potential functions are captured accurately, but finer overlapping features appear blurred.}
    
    \label{img: recos lungs}
\end{figure}

\begin{figure}[H]
    \centering
    \includegraphics[width=0.24\linewidth]{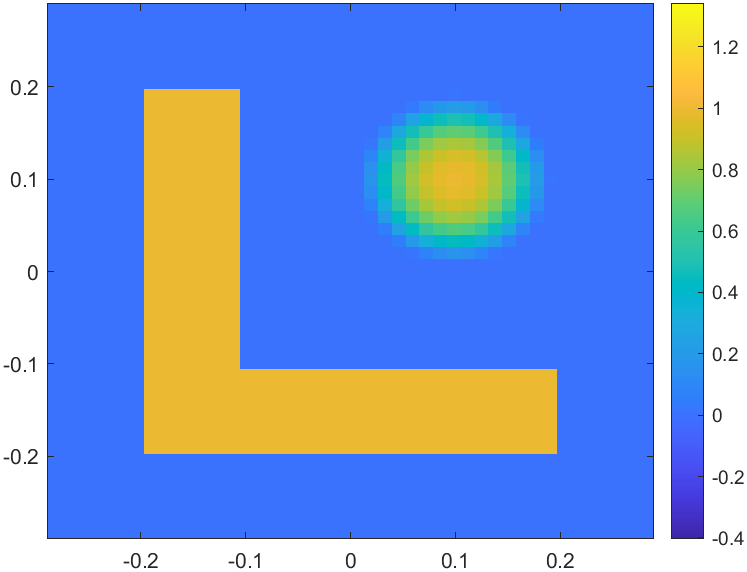}
    \includegraphics[width=0.24\linewidth]{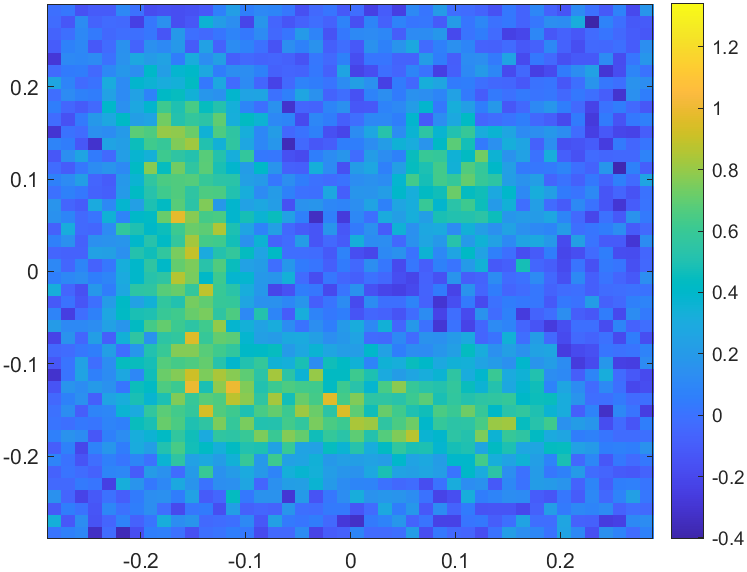}
    \includegraphics[width=0.24\linewidth]{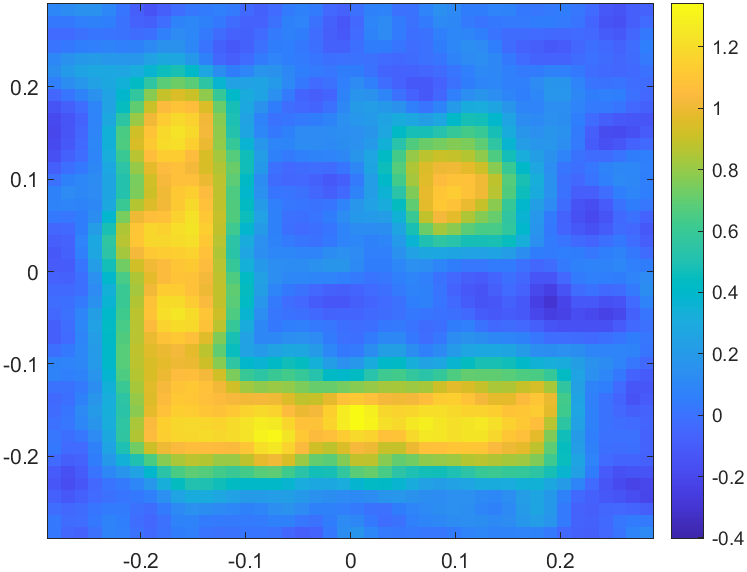}
    \includegraphics[width=0.24\linewidth]{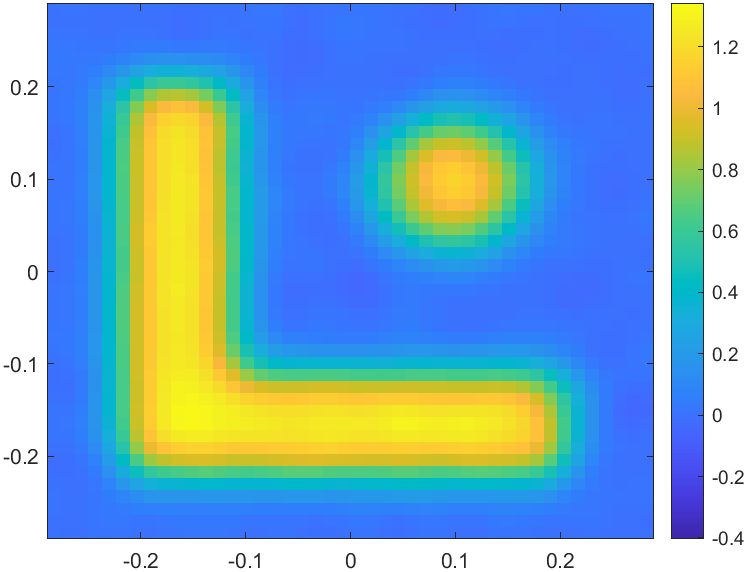}
    \caption{Example \ref{eq: example L}. \textbf{1st:} True unknown potential. \textbf{2nd:} Pointwise reconstruction using finite differences. \textbf{3rd:} Radon reconstruction using finite differences. \textbf{4th:} Radon reconstruction using regularized spectral differentiation. The jump discontinuity of the characteristic function is blurred in the reconstruction due to finite $\tau>0$ being used in the wave $v_1$, see Lemma~\ref{lemma:approx Radon}. Nevertheless, the overall geometry of the potential is captured.}
    
    \label{img: recos L}
\end{figure}

\begin{figure}[H]
    \centering
    \includegraphics[width=0.24\linewidth]{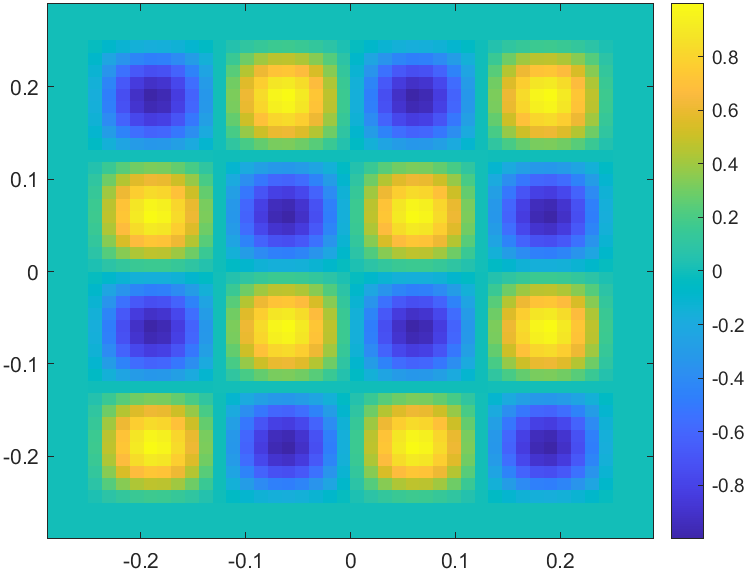}
    \includegraphics[width=0.24\linewidth]{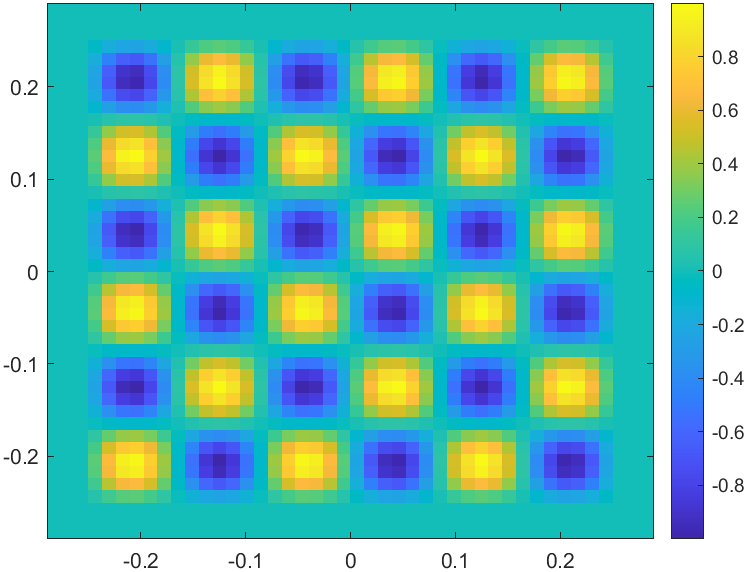}
    \includegraphics[width=0.24\linewidth]{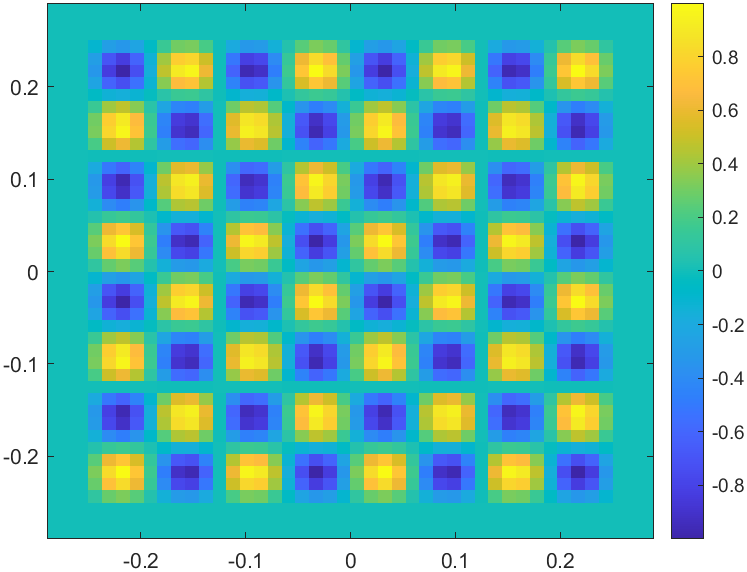}
    \includegraphics[width=0.24\linewidth]{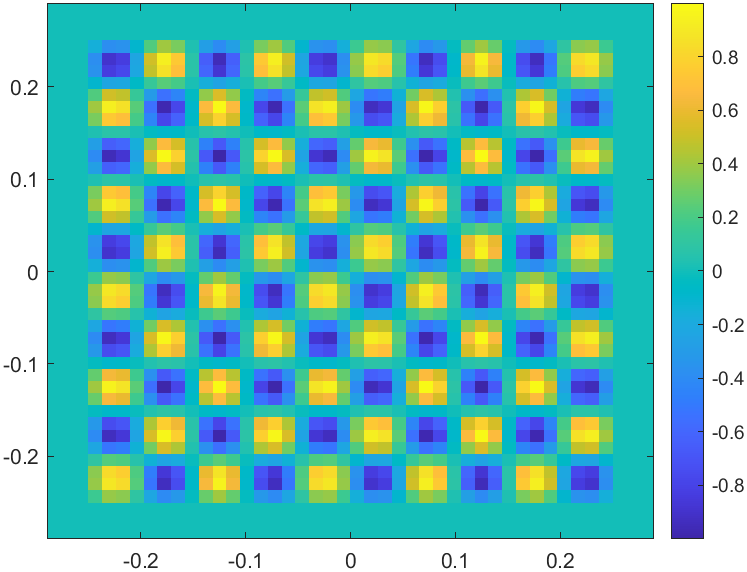}
    \includegraphics[width=0.24\linewidth]{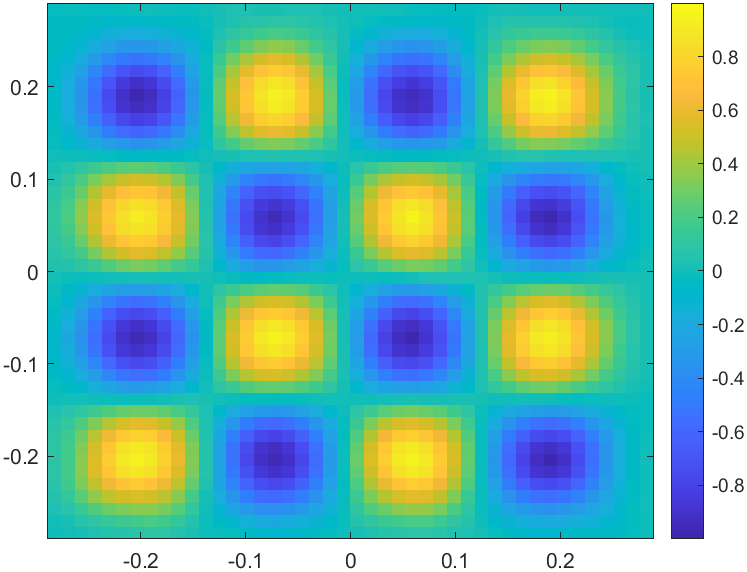}
    \includegraphics[width=0.24\linewidth]{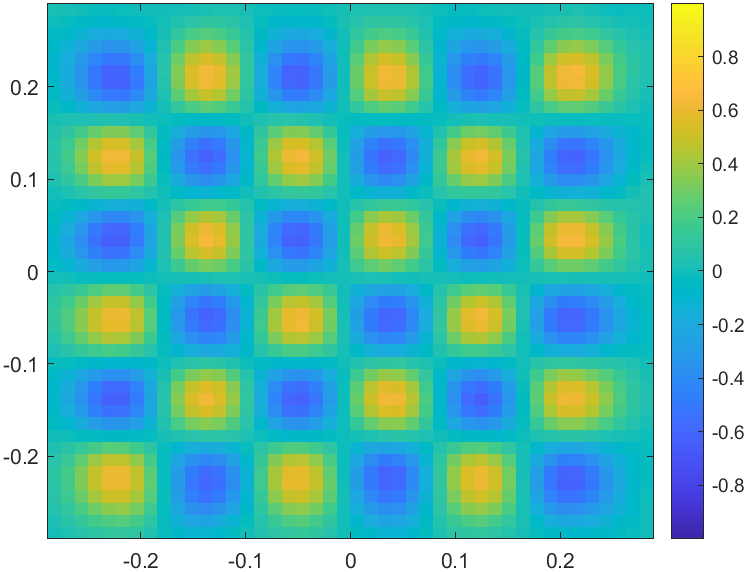}
    \includegraphics[width=0.24\linewidth]{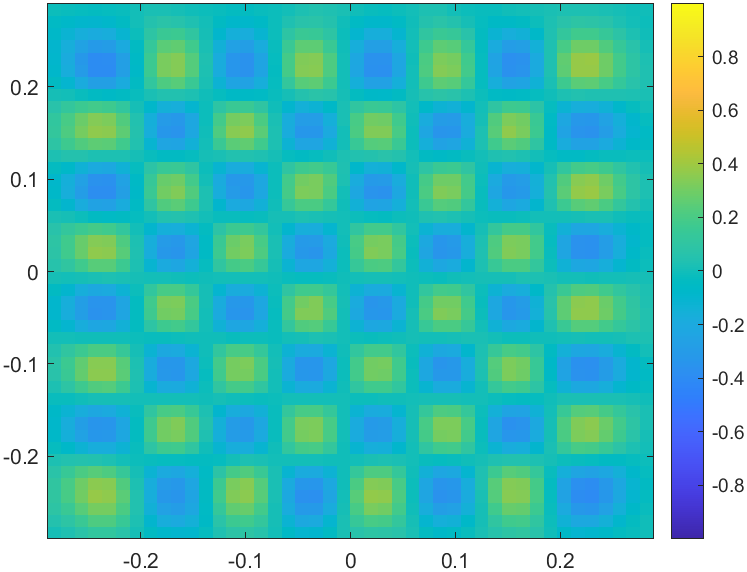}
    \includegraphics[width=0.24\linewidth]{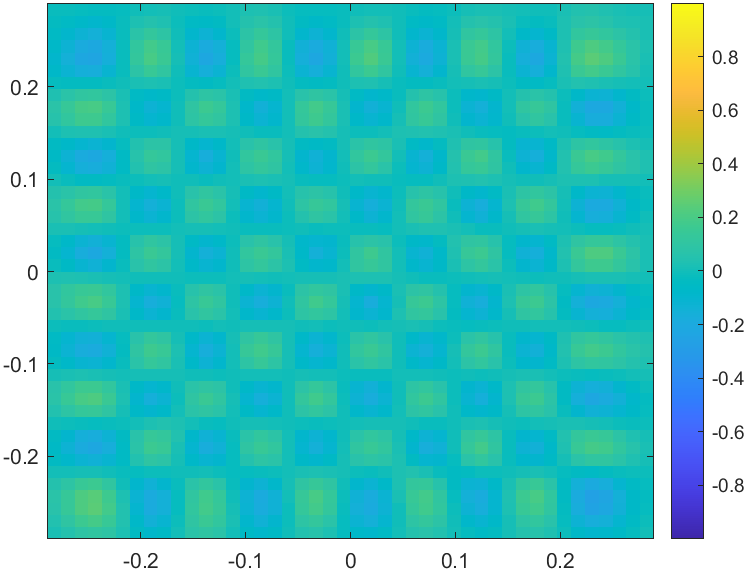}
    \caption{Example \ref{eq: example sin} with $k = 2, 3, 4, 5$. \textbf{Top row:} True unknown potentials. \textbf{Bottom row:} Reconstructions via regularized differentiation and filtered back-projection. The method reliably recovers the coarse, low-frequency structures with correct contrast and location. As the frequency $k$ in the potential $q$ increases, however, the reconstructions become progressively blurred and their amplitudes damped, with fine-scale oscillations essentially lost in the highest-frequency cases. The reconstruction parameters are kept fixed across all of the reconstructions.}
    
    \label{img: recos sin}
\end{figure}

\begin{figure}[H]
    \centering
    \includegraphics[width=0.24\linewidth]{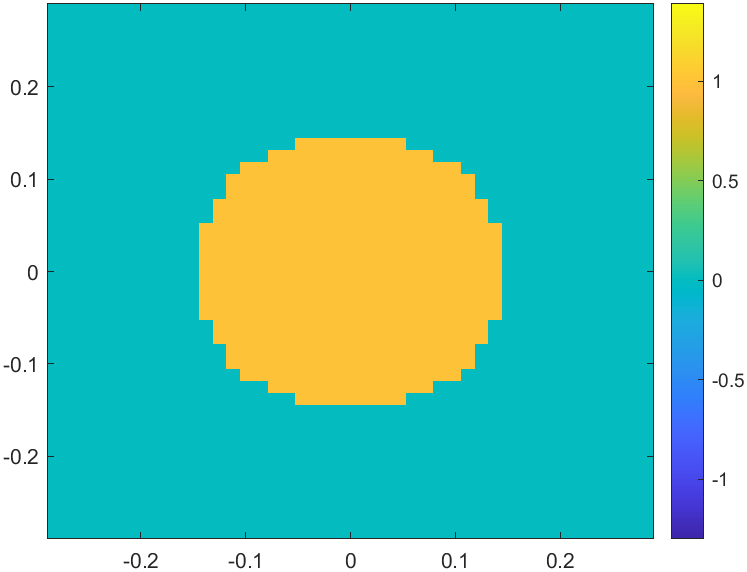}
    \includegraphics[width=0.24\linewidth]{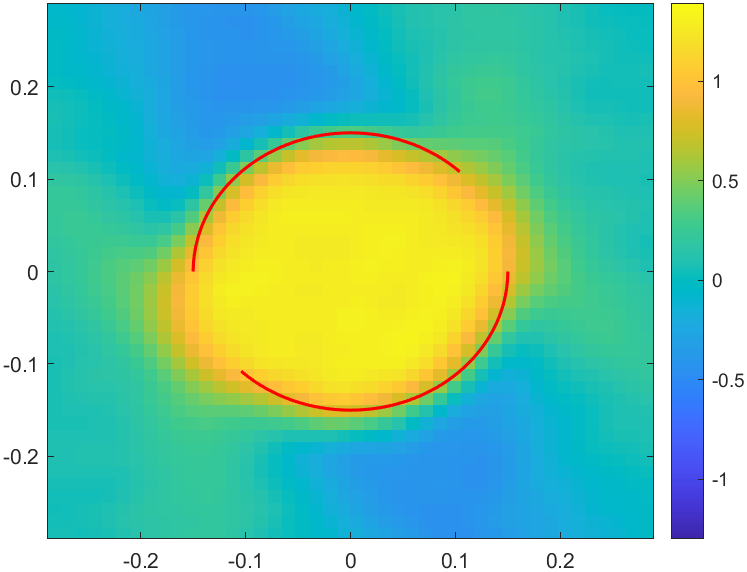}
    \includegraphics[width=0.24\linewidth]{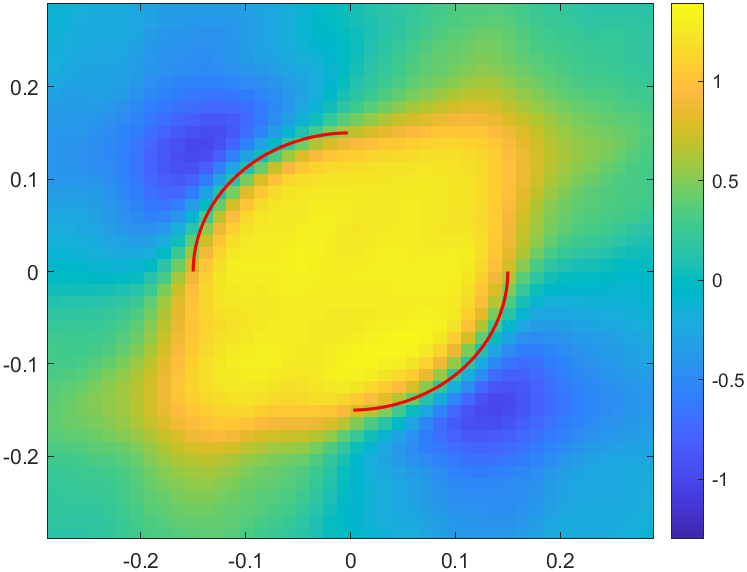}
    \includegraphics[width=0.24\linewidth]{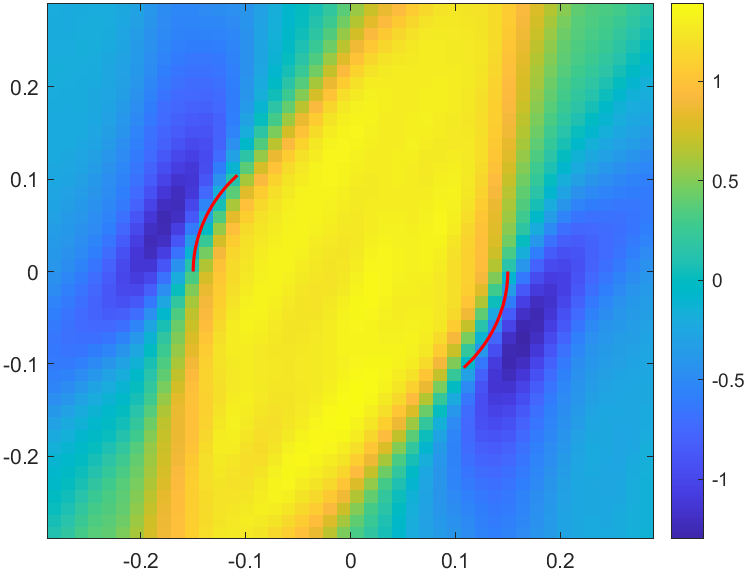}
    \caption{Example \ref{eq: example disc}, limited angle tomography. \textbf{1st:} True potential. \textbf{2nd:} Reconstruction from Radon data over $\Theta' = \{0^{\circ}, \cdots, 134^{\circ}\}$.
    \textbf{3rd:} $\Theta' = \{0^{\circ}, \cdots, 89^{\circ}\}$. \textbf{4th:} $\Theta' = \{0^{\circ}, \cdots, 44^{\circ}\}$.
    All reconstructions are obtained via regularized differentiation and filtered backprojection of the Radon transform. The locations of stable singularities (see Remark~\ref{rem: limited tomo}) are highlighted with red curves. Increasing blurring of the potential along the missing projection directions is observed as the number of incident angles is reduced.}
    
    \label{img: recos L limited angles}
\end{figure}

\begin{figure}[H]
    \centering
    \includegraphics[width=0.32\linewidth]{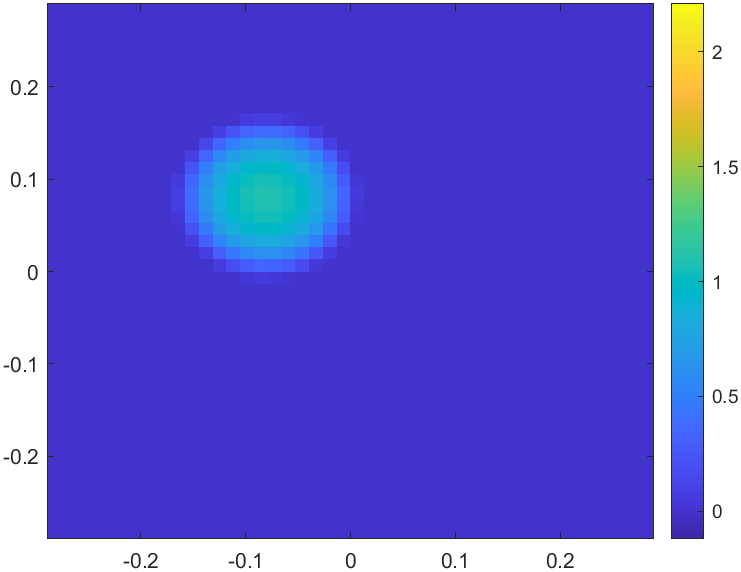}
    \includegraphics[width=0.32\linewidth]{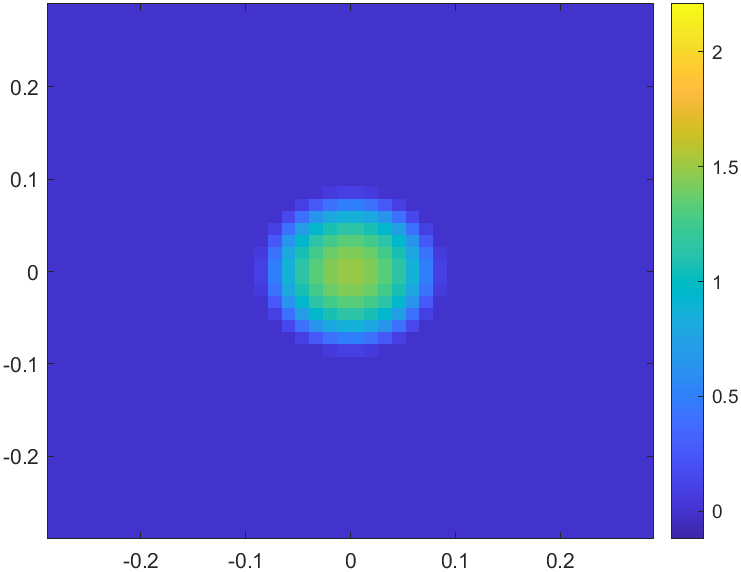}
    \includegraphics[width=0.32\linewidth]{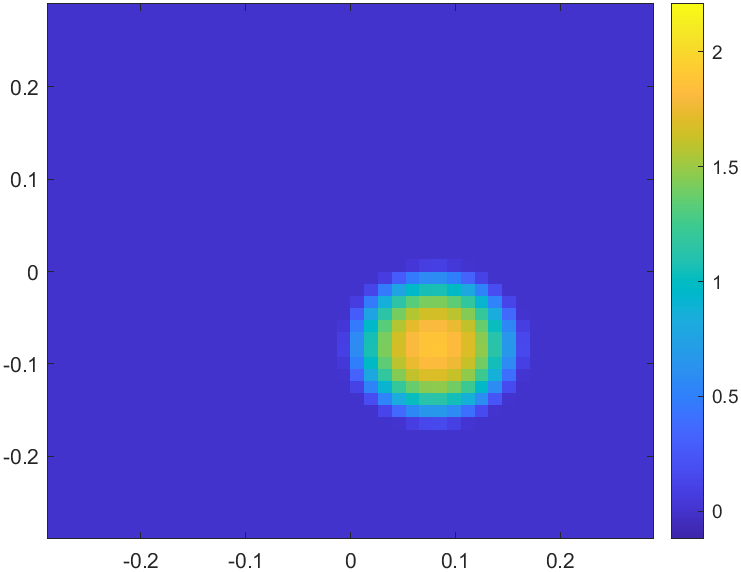}\\
    \includegraphics[width=0.32\linewidth]{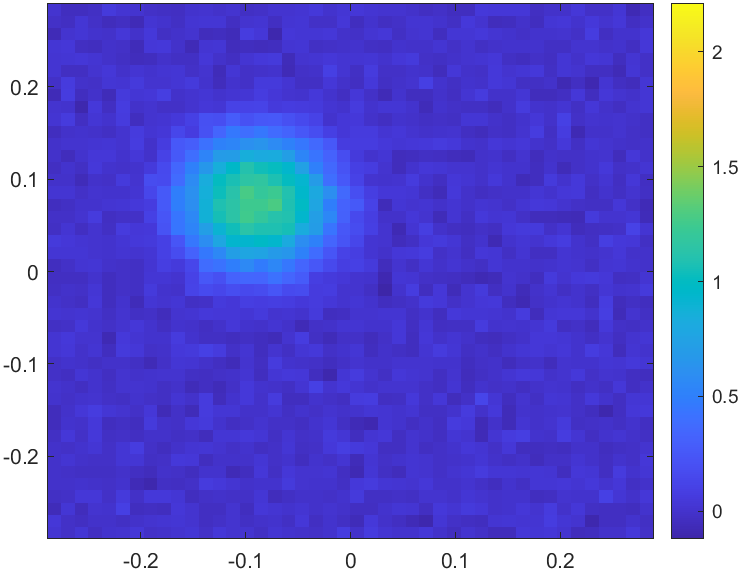}
    \includegraphics[width=0.32\linewidth]{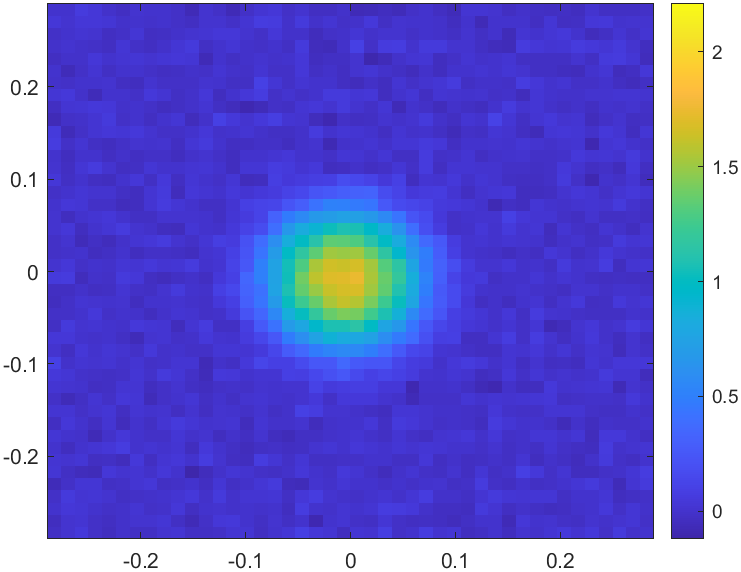}
    \includegraphics[width=0.32\linewidth]{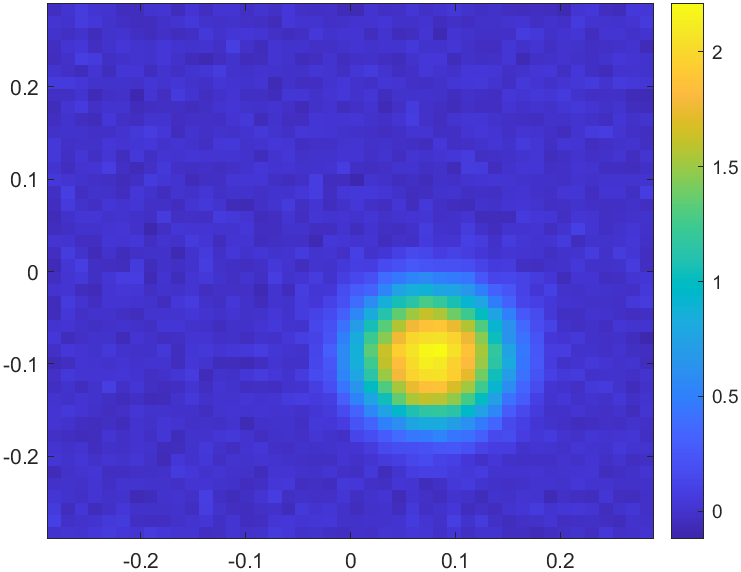}
    \caption{Example \ref{eq: example timedep}, time-dependent potential. \textbf{Top row:} Cross-sections of the true potential at times $t = 1.1, 1.5, 1.9$. \textbf{Bottom row:} Reconstructions via regularized differentiation and filtered back-projection. The potential is a smooth bump function, whose amplitude increases and position shifts linearly over time.}
    
    \label{img: timedependant}
\end{figure}

\subsection*{Acknowledgements}

This work was supported by the Research Council of Finland (Flagship of Advanced Mathematics for Sensing, Imaging and Modelling grant 359186) and Emil Aaltonen foundation.
We thank the anonymous referees for their careful reading of the manuscript and for their constructive comments and suggestions, which helped improve the paper.

\bibliography{refs} 
\bibliographystyle{abbrv}

\noindent{\footnotesize E-mail addresses:\\
Suvi Anttila: suvi.m.anttila@oulu.fi (Corresponding author)\\
Markus Harju: markus.harju@oulu.fi\\
Teemu Tyni: teemu.tyni@oulu.fi
}

\end{document}